\newcommand{\C}{\mathbb C}
\newcommand{\one}{\mathbf 1}
\newcommand{\CC}{\mathcal C}
\newcommand{\DD}{\mathcal D}
\newcommand{\EE}{\mathcal E}
\newcommand{\id}{{\mathsf{id}}}
\newcommand{\Id}{{\text{\rm Id}}}
\newcommand{\SSS}{\mathsf{S}}
\newcommand{\op}{{\text{op}}}
\newcommand{\pre}[1]{{}^{#1}\!}
\newcommand{\prevee}{{}^\vee\!}
\newcommand{\CAT}{{\mathscr C\!\mathrm{at}}}
\newcommand{\Set}{\mathrm{Set}}
\newcommand{\tensor}{\otimes}
\newcommand{\nattrans}{\Rightarrow}
\newcommand{\dinat}{\stackrel\bullet\nattrans}
\newcommand{\pt}{\text{pt}}
\newcommand{\pnt}{\star}
\renewcommand{\AA}{\mathcal A}
\newcommand{\BB}{\mathcal B}
\newcommand{\figdir}{pstex}
 \DeclareMathOperator{\Hom}{Hom}
 \DeclareMathOperator{\coev}{\mathsf{coev}}
 \DeclareMathOperator{\ev}{\mathsf{ev}}
 \DeclareMathOperator{\Nat}{Nat}
\renewcommand{\paragraph}[1]{\bigskip\noindent \emph{#1.}}
\newtheorem{thm}{Theorem}
\newtheorem{prop}[thm]{Proposition}
\newtheorem{lemma}[thm]{Lemma}
\theoremstyle{definition}
\newtheorem*{defn}{Definition}
\renewcommand{\phi}{\varphi}
\renewcommand{\epsilon}{\varepsilon}
\newcommand{\mytag}[1]{\tag{\text{\textsf{#1}}}}
\newcommand{\handtag}[1]{{\rm(\textsf{#1})}}
\newcommand{\bref}[1]{{\rm(\ref{#1})}}
\title{A diagrammatic approach to Hopf monads}
\author{Simon Willerton}
\email{s.willerton@shef.ac.uk}
\begin{document}

\begin{abstract}
Given a Hopf algebra in a symmetric monoidal category with duals, the
category of modules inherits the structure of a monoidal category with
duals.  If the notion of algebra is replaced with that of monad on a
monoidal category with duals then Brugui\`eres and Virelizier showed
when the category of modules inherits this structure of being monoidal
with duals, and this gave
rise to what they called a Hopf monad.  In this paper it is shown that
there are good diagrammatic descriptions of dinatural transformations
which allows the three-dimensional, object-free nature of their
constructions to become apparent.
\end{abstract}

\maketitle

\section*{Introduction}
\subsection*{Overview}
\thispagestyle{empty}
An algebra, i.e., a monoid in the category of vector spaces,
has an associated category of modules (or representations, if you
prefer).  A Hopf algebra is an algebra equipped with extra structure
which ensures that its category of modules inherits the monoidal
structure and
duals from the category of vector spaces.  These notions work similarly in braided monoidal categories
other than that of vector spaces.  A monad on a monoidal category can
be thought of as a generalization of an algebra (or monoid) in that category and has an associated
category of modules (also known, confusingly, as its category of
algebras).  
Brugi\`eres and Virelizier~\cite{BruguieresVirelizier:Hopf} defined, following Moerdijk~\cite{Moerdijk:MonadsTensorCategories},
a \emph{Hopf monad} structure which
ensures that a monad's category of modules is monoidal with duals.

The goal of this paper is to put some of the work of Brugui\`eres and
Virelizier into a diagrammatic context, which also means to put it
into appropriate framework of monoidal two-categories.  One of the
purposes of this was to make their constructions essentially
object-free.  To do this it was necessary to do various things
including using an object-free formulation of categories with duals,
which here means describing evaluation and coevaluation as dinatural
transformations and then extending string diagrammatics to include
dinatural transformations, something which appears to work rather
well.  Such dinatural transformations exist in the landscape of the
monoidal two-category of categories, and so are three-dimensional in
nature, thus are better manipulated, I would argue, using the
three-dimensional algebra presented here.

In terms of results on Hopf monads, many of the results are just
slight simplifications of those of Brugui\`eres and Virelizier.  The
example of a strong monoidal functor with a left adjoint is given a
more explicit treatment, this example being of
primary importance to me.  My
motivation lies in the specific case of a Hopf monad on the derived
category of coherent sheaves on a complex manifold; the monad coming
from a strong monoidal functor with a left adjoint.

\subsection*{Three-dimensional string diagrams}
The utility of string diagram notation in describing adjunctions and
monads is well-known (see for example
\cite{Lauda:FrobeniusAmbidextrous} though it undoubtably has its roots in the
Australian school).  Adjunctions and monads live in the two-category
of categories and their counterparts in the monoidal category (or
one-object two-category) of vector spaces are duals and algebras, and
quantum topologists know that these are well notated using string
diagrams, with the distinction between notation and application
becoming blurred in the case of knot invariants arising from ribbon
categories.  Low dimensional topology and low dimensional category
theory seem closely linked.
When one is considering monads on monoidal categories, one is led to
considering three-dimensional notation and this works similarly well.

There is some precedent in the use of surface
diagrams by the Australian school but this is not well
represented in the literature: see, for
example,~\cite{Street:FunctorialCalculus}.  There is also the mythical,
unavailable~\cite{McIntyreTrimble}, but I have not seen a copy.

\subsection*{Hopf monads}
A monad \(T\colon \CC\to\CC\) on a category is an endofunctor together
with a \emph{multiplication} natural transformation \(T^2\nattrans T\)
and a \emph{unit} natural transformation \(\Id_\C\nattrans T\)
satisfying some appropriate associativity and unital conditions.  One
then has the category of \(T\)-modules, consisting of pairs \((m,r)\)
where \(m\) is an object in \(\CC\) and \(r\colon T(m)\to m\) is an action map in a
suitable sense.  If \(\CC\) is a monoidal category then it is natural
to ask if the tensor product of any two \(T\)-modules can be given a
natural \(T\)-module structure.  For instance, if \(T\) is of the form
\(A\otimes {-}\) for some algebra object \(A\) of \(\CC\) then this
occurs when \(A\) has the structure of a bialgebra.  The question was
considered by Moerdijk \cite{Moerdijk:MonadsTensorCategories} and he
showed that lifting the monoidal structure on \(\CC\) to a monoidal structure
on the module category corresponds precisely to giving \(T\) some
extra structure to form what he called, following operad terminology,
a Hopf monad.  However in this arena this is not the best nomenclature
and a better term would be either \emph{bimonad}, by analogy with
bialgebras, or, more accurately, but less succinctly,
\emph{opmonoidal monad}, as the extra structure is the
same as making \(T\) into a monad in the category of opmonoidal
functors, as was observed by McCrudden
\cite{McCrudden:OpmonoidalMonads}.

A natural progression from this is to ask if duals from \(\CC\) lift
to duals on the category of modules, so if \(\CC\) is a monoidal
category with duals and \(M\) is object of \(\CC\) with an action of
the bimonad \(T\) on it, does the dual \(M^\vee\) also naturally have an
action of \(T\) on it?  In the case where \(T=A\otimes{-}\) is a monad
on, say, the category of vector spaces for a bialgebra \(A\), then
\(A\) being a \emph{Hopf} algebra, i.e., having an antipode, suffices
to make the module category have a lift of the duality on vector
spaces.  Brugui\`eres and Virelizier defined the notion of antipode
for a bimonad on a category with duals, such that it corresponds to
the duals lifting to the module category.

\subsection*{An example}
As explained in Section~4, a strong monoidal functor with a left
adjoint gives rise to a Hopf monad by composing it with its left
adjoint.  Suppose $G$ is a finite group, then a good example of such a
functor is the functor $\Delta^*$ from the category of representations
of $G\times G$ to the category of representations of $G$ defined by
restricting to the diagonal copy of $G$.  This is a strong monoidal
functor and has a left adjoint $\Delta_*$ which is induction from $G$
to $G\times G$ along the diagonal embedding.  Then $\Delta^*\Delta_*$
is a Hopf monad on the category of representations of $G$, in fact it
is basically the tensor-with-the-group-algebra functor, the group
algebra (with the conjugation action) is a Hopf algebra in this
category.  Similar examples arise in other areas such as when a
complex manifold takes the place of the finite group and the derived
category of coherent sheaves takes the place of the representation
category.  These examples naturally arise in topological quantum field
theory.  Details will appear elsewhere.

\subsection*{Synopsis} In the first section we recall the notion of
string diagrams and how they are useful for denoting adjoints.  We
then enhance the notation in a third dimension to denote products of
categories.  We then see how this is used with monoidal and opmonoidal
functors and natural transformations.  Finally opposite categories are
introduced to the notation.

In Section~2 we recall the string diagrammatic approach to monads and
see how Moerdijk's notion of bimonad fits in.  In Section~3 we recall
the notion of dinatural transformations and how they are used in the
definition of a monoidal category with duals.

Section~4 is where we get to the definition of Hopf monad and
Brugui\`eres and Virilizier's theorem about their categories of
modules being monoidal with duals.  Finally, following~\cite{BruguieresVirelizier:Hopf}, we see how a strong
monoidal functor with a left adjoint gives rise to a Hopf monad.

\subsection*{Terminology}
There is a split in terminology between category theorists and, say,
quantum algebraists, in that an object in a monoidal category with a
unital, associative multiplication is called a monoid by the former
and an algebra by the latter, primarily because the typical categories
for the two groups of mathematicians are respectively the category of sets and the
category of vector spaces.  We will stick to the latter terminology as
we are close to areas in which Lie algebra objects and universal
enveloping algebra objects are considered.  Similarly it makes sense
from this perspective to talk of \emph{modules} over a monad, rather
than \emph{algebras} over a monad as the category theorists prefer.
My apologies go to any reader to whom this seems ridiculous, but there
will be a good proportion of the audience to whom it will seem very sensible.

\subsection*{Acknowledgements}
Thanks to Alain Brugui\`eres, Eugenia Cheng, Aaron Lauda and Alexis
Virelizier for useful comments.

\section{The diagrammatics of natural transformations}
In this section we introduce the basic string diagram notation for
natural transformations.  In particular we will be interested in
monoidal categories and opmonoidal functors, so will need to extend
the notation in an extra dimension.  We will also need to consider
contravariant functors so we end this section with a way of denoting
opposite categories.

\subsection{String diagrams}
Firstly, here is a very quick reminder on the use of string diagrams to
represent natural transformations.  These diagrammatics have a history
going back to Feynmann and Penrose, but were formalized in the context
of monoidal categories by Joyal and Street \cite{JoyalStreet}, and this formalism extends
to arbitrary two-categories; though here we will
just be interested in the two-category of categories, functors and
natural transformations.  The
idea of string diagrams for natural transformations
is that the usual globular pictures of functors and natural transformations are
replaced by their Poincar\'e duals, so that categories are represented by
two-cells, functors by one-cells, and natural transformations by
zero-cells.  The basic example is as follows:
 \[\raisebox{-.45\height}{\input{\figdir/H1_1_0.pstex_t}}\qquad\text{becomes}\qquad\raisebox{-.45\height}{\input{\figdir/H1_1_1.pstex_t}}.\]
A more complicated example is the following:
 \[\raisebox{-.45\height}{\input{\figdir/H1_1_0b.pstex_t}}\qquad\text{becomes}\qquad\raisebox{-.45\height}{\input{\figdir/H1_1_1b.pstex_t}}.\]
Note that the identity functor is omitted from the string diagram
notation.

The standard example of the utility of string diagrams is with regard
to adjoint functors and this will turn out to be useful to us later.
So suppose that \(F\colon \CC\to\DD\) and \(U\colon \DD \to \CC\) form
an adjunction \(F\dashv U\), then we have the unit and counit natural
transformations, \(\epsilon \colon F\circ U\nattrans \Id_\DD\) and
\(\eta \colon \Id_\CC\nattrans U\circ F\), which are drawn as follows:
 \[\epsilon\equiv\raisebox{-.45\height}{\input{\figdir/H1_1_2.pstex_t}};\qquad\eta\equiv{\raisebox{-.45\height}{\input{\figdir/H1_1_3.pstex_t}}}.\]
Then the required conditions on the unit and counit are drawn as
 \[\raisebox{-.45\height}{\input{\figdir/H1_1_4.pstex_t}}=\raisebox{-.45\height}{\input{\figdir/H1_1_5.pstex_t}}\quad\text{and}\quad
 \raisebox{-.45\height}{\input{\figdir/H1_1_6.pstex_t}}=\raisebox{-.45\height}{\input{\figdir/H1_1_7.pstex_t}},\]
where the vertical line marked with \(F\) means the identity natural
transformation on the functor \(F\).

\subsection{Monoidal categories and monoidal functors}
The notation above can be enhanced to also encode cartesian products
of categories, so can denote, for instance, functors of the form
\(\otimes\colon \CC\times\CC\to\CC\).   In this context we can think
of the two-category  \(\CAT\) with its cartesian product as being a
one-object three-category, and so
should expect to have to use three-dimensions in our notation.

We will use the direction out of the page for the `product direction'.
So, for example, given functors \(G,H\colon \CC\times \CC'\to \CC''\)
and \(K\colon \CC''\to \CC''\), we denote \(G\) and \(K\circ H\) as
follows.
\[\raisebox{-.45\height}{\input{\figdir/H1_2_2a.pstex_t}}\quad\raisebox{-.45\height}{\input{\figdir/H1_2_2b.pstex_t}}\]
A natural transformation \(\theta\colon G \nattrans K\circ H\) will
then be denoted
\[\raisebox{-.45\height}{\input{\figdir/H1_2_2c.pstex_t}}\quad\text{or}\quad\raisebox{-.45\height}{\begin{picture}(0,0)%
\includegraphics{\figdir/H1_2_2d}%
\end{picture}%
\setlength{\unitlength}{4144sp}%
\begingroup\makeatletter\ifx\SetFigFont\undefined%
\gdef\SetFigFont#1#2{%
  \fontsize{#1}{#2pt}%
  \selectfont}%
\fi\endgroup%
\begin{picture}(1730,1162)(2242,-1014)
\end{picture}%
},\]
the latter being used if the labels are clear from the
context.

Note that diagrams are read front to back, right to left and bottom to top.


\subsubsection{Monoidal categories}
A monoidal category \((\CC,\otimes,\one)\) consists, as is well known,
of a
category \(\CC\), a functor \(\otimes\colon \CC\times \CC\to
\CC\), and a unit object \(\one\) --- here considered as a functor
\(\one\colon \pnt\to \CC\) from the one object, one morphism category
\(\pnt\) ---  together with an associativity natural transformation
\(\alpha\colon \otimes \circ (\Id\times \otimes)\nattrans \otimes
\circ (\otimes \times \Id)\),
and unit natural isomorphisms \(\nu_l\colon {\tensor}\circ
(\one\times \Id_\CC)\nattrans \Id_\CC\) and \(\nu_r\colon {\tensor}\circ
(\Id_\CC\times \one)\nattrans \Id_\CC\). The natural transformations have to satisfy the pentagon and
triangle relations.

The associativity will be drawn as
  \[\alpha\equiv\raisebox{-.45\height}{\begin{picture}(0,0)%
\includegraphics{\figdir/H1_2_3}%
\end{picture}%
\setlength{\unitlength}{4144sp}%
\begingroup\makeatletter\ifx\SetFigFont\undefined%
\gdef\SetFigFont#1#2{%
  \fontsize{#1}{#2pt}%
  \selectfont}%
\fi\endgroup%
\begin{picture}(2143,1474)(2423,-1302)
\end{picture}%
},\]
but coherence means that notationally we can draw three-fold tensor
products with the understanding that to make sense they must be
resolved into the composition of two-fold tensor products, but that, up
to canonical identification, it is independent of the choice of
resolution, so we could instead draw the following triple tensor
product \({-}\otimes{-}\otimes{-}\colon \CC\times \CC\times \CC\to \CC\).
 \[\raisebox{-.45\height}{\begin{picture}(0,0)%
\includegraphics{\figdir/H1_2_4a}%
\end{picture}%
\setlength{\unitlength}{4144sp}%
\begingroup\makeatletter\ifx\SetFigFont\undefined%
\gdef\SetFigFont#1#2{%
  \fontsize{#1}{#2pt}%
  \selectfont}%
\fi\endgroup%
\begin{picture}(1740,420)(2836,-224)
\end{picture}%
}\]

The
category \(\pnt\) is the unit object for the product \(\times\) on
\(\CAT\); we will make the canonical identifications  \(\CC\times
\pnt\cong \CC\cong \pnt\times \CC\) and thus allow ourselves to denote
\(\pnt\)  by the empty surface.  So for instance the unit
\(\one\colon \pnt\to \CC\) will be denoted by the picture on the left,
from which the picture on the right will be understood.
  \[\raisebox{-.45\height}{\begin{picture}(0,0)%
\includegraphics{\figdir/H1_2_5}%
\end{picture}%
\setlength{\unitlength}{4144sp}%
\begingroup\makeatletter\ifx\SetFigFont\undefined%
\gdef\SetFigFont#1#2{%
  \fontsize{#1}{#2pt}%
  \selectfont}%
\fi\endgroup%
\begin{picture}(930,658)(1247,-344)
\put(1310,122){\makebox(0,0)[lb]{\smash{{\SetFigFont{8}{9.6}{\color[rgb]{0,0,0}$\CC$}%
}}}}
\end{picture}%
}\equiv\raisebox{-.45\height}{\input{\figdir/H1_2_5a.pstex_t}}.\]

The unit natural isomorphisms \(\nu_l\colon {\tensor}\circ
(\one\times \Id_\CC)\nattrans \Id_\CC\) and \(\nu_r\colon {{}\tensor{}}\circ
(\Id_\CC\times \one)\nattrans\Id_\CC\) are drawn as
 \[\nu_l\equiv\raisebox{-.45\height}{\begin{picture}(0,0)%
\includegraphics{\figdir/H1_2_6}%
\end{picture}%
\setlength{\unitlength}{4144sp}%
\begingroup\makeatletter\ifx\SetFigFont\undefined%
\gdef\SetFigFont#1#2{%
  \fontsize{#1}{#2pt}%
  \selectfont}%
\fi\endgroup%
\begin{picture}(1549,1058)(1311,-2110)
\end{picture}%
}\qquad\text{and}\qquad\nu_r\equiv\raisebox{-.45\height}{\begin{picture}(0,0)%
\includegraphics{\figdir/H1_2_7}%
\end{picture}%
\setlength{\unitlength}{4144sp}%
\begingroup\makeatletter\ifx\SetFigFont\undefined%
\gdef\SetFigFont#1#2{%
  \fontsize{#1}{#2pt}%
  \selectfont}%
\fi\endgroup%
\begin{picture}(1369,1100)(1311,-2152)
\end{picture}%
}.\]
The inverses of these are drawn the same but the other way up.

\subsubsection{Monoidal and opmonoidal functors}
\label{Subsection:MonoidalFunctors}
If \((\CC,\otimes,\one)\) and \((\DD,\otimes,\one)\) are monoidal
categories then a monoidal functor \(M\colon \DD\to \CC\) ---
sometimes called a weak- or lax-monoidal functor --- means a functor
equipped with a natural family of morphisms \(M(d_1)\otimes M(d_2)\to
M(d_1\otimes d_2)\), parameterized by pairs of objects, and a morphism
\(\one\to M(\one)\) satisfying coherence conditions.  In other words, there are natural transformations
\(\sigma_M^2\colon{\tensor}\circ(M\times M)\nattrans M\circ \otimes\) and
\(\sigma_M^0\colon\one\nattrans M\circ\one\), satisfying some
constraints.  These natural transformations will be drawn as follows:
  \[\sigma_M^2\equiv\raisebox{-.45\height}{\input{\figdir/H1_2_8.pstex_t}}\qquad\text{and}\qquad\sigma_M^0\equiv\raisebox{-.45\height}{\begin{picture}(0,0)%
\includegraphics{\figdir/H1_2_9}%
\end{picture}%
\setlength{\unitlength}{4144sp}%
\begingroup\makeatletter\ifx\SetFigFont\undefined%
\gdef\SetFigFont#1#2{%
  \fontsize{#1}{#2pt}%
  \selectfont}%
\fi\endgroup%
\begin{picture}(609,664)(979,-343)
\put(1081,119){\makebox(0,0)[lb]{\smash{{\SetFigFont{8}{9.6}{\color[rgb]{0,0,0}$M$}%
}}}}
\end{picture}%
}.\]
The conditions that they are required to satisfy are the
diagrammatically appealing:
  \[\raisebox{-.45\height}{\begin{picture}(0,0)%
\includegraphics{\figdir/H1_2_10}%
\end{picture}%
\setlength{\unitlength}{4144sp}%
\begingroup\makeatletter\ifx\SetFigFont\undefined%
\gdef\SetFigFont#1#2{%
  \fontsize{#1}{#2pt}%
  \selectfont}%
\fi\endgroup%
\begin{picture}(1583,1434)(2667,-1245)
\end{picture}%
}=\raisebox{-.45\height}{\begin{picture}(0,0)%
\includegraphics{\figdir/H1_2_11}%
\end{picture}%
\setlength{\unitlength}{4144sp}%
\begingroup\makeatletter\ifx\SetFigFont\undefined%
\gdef\SetFigFont#1#2{%
  \fontsize{#1}{#2pt}%
  \selectfont}%
\fi\endgroup%
\begin{picture}(1583,1456)(2667,-899)
\end{picture}%
}\mytag{Mondl1}\label{Monoidal1}\]
  \[\raisebox{-.45\height}{\begin{picture}(0,0)%
\includegraphics{\figdir/H1_2_12}%
\end{picture}%
\setlength{\unitlength}{4144sp}%
\begingroup\makeatletter\ifx\SetFigFont\undefined%
\gdef\SetFigFont#1#2{%
  \fontsize{#1}{#2pt}%
  \selectfont}%
\fi\endgroup%
\begin{picture}(1549,1058)(1311,-2110)
\put(1710,-1302){\makebox(0,0)[lb]{\smash{{\SetFigFont{8}{9.6}{\color[rgb]{0,0,0}$M$}%
}}}}
\end{picture}%
}=\raisebox{-.45\height}{\begin{picture}(0,0)%
\includegraphics{\figdir/H1_2_13}%
\end{picture}%
\setlength{\unitlength}{4144sp}%
\begingroup\makeatletter\ifx\SetFigFont\undefined%
\gdef\SetFigFont#1#2{%
  \fontsize{#1}{#2pt}%
  \selectfont}%
\fi\endgroup%
\begin{picture}(1549,1058)(1311,-2110)
\put(2033,-1344){\makebox(0,0)[lb]{\smash{{\SetFigFont{8}{9.6}{\color[rgb]{0,0,0}$M$}%
}}}}
\end{picture}%
}\mytag{Mondl2}\label{Monoidal2}
  \]\[
   \raisebox{-.45\height}{\begin{picture}(0,0)%
\includegraphics{\figdir/H1_2_14}%
\end{picture}%
\setlength{\unitlength}{4144sp}%
\begingroup\makeatletter\ifx\SetFigFont\undefined%
\gdef\SetFigFont#1#2{%
  \fontsize{#1}{#2pt}%
  \selectfont}%
\fi\endgroup%
\begin{picture}(1369,1100)(1311,-2152)
\put(1683,-1278){\makebox(0,0)[lb]{\smash{{\SetFigFont{8}{9.6}{\color[rgb]{0,0,0}$M$}%
}}}}
\end{picture}%
}=\raisebox{-.45\height}{\begin{picture}(0,0)%
\includegraphics{\figdir/H1_2_15}%
\end{picture}%
\setlength{\unitlength}{4144sp}%
\begingroup\makeatletter\ifx\SetFigFont\undefined%
\gdef\SetFigFont#1#2{%
  \fontsize{#1}{#2pt}%
  \selectfont}%
\fi\endgroup%
\begin{picture}(1369,1100)(1311,-2152)
\put(1924,-1310){\makebox(0,0)[lb]{\smash{{\SetFigFont{8}{9.6}{\color[rgb]{0,0,0}$M$}%
}}}}
\end{picture}%
}.\mytag{Mondl3}\label{Monoidal3}\]
The first condition together with associativity of the tensor product
means that again we can unambiguously draw triple
tensor products, with the understanding that such a triple tensor
product should be resolved into one of the two compositions of
ordinary tensor products.  Thus we think of the two natural
transformations pictured as a single natural transformation
\(({-}\otimes{-}\otimes{-})\circ(M\times M \times M)\nattrans
M\circ ({-}\otimes{-}\otimes{-})\), drawn as
\[\raisebox{-.45\height}{\begin{picture}(0,0)%
\includegraphics{\figdir/H1_2_4}%
\end{picture}%
\setlength{\unitlength}{4144sp}%
\begingroup\makeatletter\ifx\SetFigFont\undefined%
\gdef\SetFigFont#1#2{%
  \fontsize{#1}{#2pt}%
  \selectfont}%
\fi\endgroup%
\begin{picture}(1740,1203)(2836,-1007)
\end{picture}%
}.\]

A functor \(M\) as above is said to be \emph{strong} monoidal if both the
natural transformations drawn above are natural \emph{isomorphisms}.

Similarly an opmonoidal functor \(Q\colon \CC\to \DD\) --- sometimes
called a comonoidal functor --- is a functor equipped with natural
transformations \(\sigma^Q_2\colon Q\circ \otimes\nattrans \tensor\circ(Q\times Q)\)
and \( \sigma^Q_0\colon Q\circ\one\nattrans \one\), drawn as
  \[\sigma^Q_2\equiv\raisebox{-.45\height}{\input{\figdir/H1_2_16.pstex_t}}\qquad\text{and}\qquad\sigma^Q_0\equiv\raisebox{-.45\height}{\begin{picture}(0,0)%
\includegraphics{\figdir/H1_2_17}%
\end{picture}%
\setlength{\unitlength}{4144sp}%
\begingroup\makeatletter\ifx\SetFigFont\undefined%
\gdef\SetFigFont#1#2{%
  \fontsize{#1}{#2pt}%
  \selectfont}%
\fi\endgroup%
\begin{picture}(610,664)(979,-353)
\put(1191,-71){\makebox(0,0)[lb]{\smash{{\SetFigFont{8}{9.6}{\color[rgb]{0,0,0}$Q$}%
}}}}
\end{picture}%
},\]
which satisfy the above relations  inverted, which will be called (\textsf{Opmondl1--3}).

Note the important situation in which \(U\) and \(F\) form an adjoint pair
\(F\dashv U\) of
functors between monoidal categories.  Then there is a
bijection between pairs of natural transformations making \(U\)
monoidal and pairs of natural transformations making \(F\) opmonoidal;
or more informally, \(U\) is monoidal if and only if \(F\) is
opmonoidal.  To see this, suppose that \(U\) is monoidal, then an
opmonoidal structure is defined on \(F\) using the following two
natural transformations:
  \[\raisebox{-.45\height}{\input{\figdir/H1_2_18.pstex_t}}\qquad\text{and}\qquad\raisebox{-.45\height}{\input{\figdir/H1_2_19.pstex_t}}.\]
The proof that they satisfy the requisite
relations is easily derived diagrammatically.
\subsubsection{Opmonoidal natural transformations}\label{Section:OpMonNT}
We will need the notion of an opmonoidal natural transformation.  So
suppose that $P,Q\colon \CC\to \DD$ are two opmonoidal functors
between monoidal categories, then an opmonoidal natural transformation
$\theta\colon P\nattrans Q$ is a natural transformation which commutes
with the opmonoidal structure transformations, in other words, the
following relations hold:
\begin{align*}
\raisebox{-.45\height}{\input{\figdir/H1_2_20.pstex_t}}&=\raisebox{-.45\height}{\input{\figdir/H1_2_21.pstex_t}}\mytag{OpmonNT1}\label{OpmonNT1};\\
\raisebox{-.45\height}{\begin{picture}(0,0)%
\includegraphics{\figdir/H1_2_23}%
\end{picture}%
\setlength{\unitlength}{4144sp}%
\begingroup\makeatletter\ifx\SetFigFont\undefined%
\gdef\SetFigFont#1#2{%
  \fontsize{#1}{#2pt}%
  \selectfont}%
\fi\endgroup%
\begin{picture}(610,664)(979,-353)
\put(1191,-71){\makebox(0,0)[lb]{\smash{{\SetFigFont{8}{9.6}{\color[rgb]{0,0,0}$P$}%
}}}}
\end{picture}%
}&=\raisebox{-.45\height}{\input{\figdir/H1_2_22.pstex_t}}\mytag{OpmonNT2}\label{OpmonNT2}.
\end{align*}

\subsection{Opposite categories and contravariance}
Later on we will be interested in looking at duals in categories.
There is a slight problem from the point of view of diagrams in that a duality
\({}^\vee\) on a category \(\CC\) is actually a contravariant functor,
so is not a functor \(\CC\to \CC\) but rather can be considered a
covariant functor \(\CC^\op\to \CC\).  This means it will be extremely
convenient to be able to denote opposites in the diagrammatic
language.  First it is imperative to think about the operation of
taking opposites inside the two-category \(\CAT\).  Given a category
\(\CC\) the category \(\CC^\op\) is the category whose objects are in
canonical bijection with the objects of \(\CC\) but whose arrows are
reversed.  This means given a functor \(G\colon\CC\to \DD\) one
obtains a functor \(G^\op\colon\CC^\op \to \DD^\op\).  However, given
a natural transformation \(\theta\colon G_1\nattrans G_3\circ G_2\)
one obtains a natural transformation \(\theta^\op\colon G_3^\op\circ
G_2^\op\nattrans G_1^\op\).  This is an easy and informative exercise
for the reader.  Put another way, this gives a two-functor
\({}^\op\colon \CAT\to \CAT^\text{co}\) where \(\CAT^\text{co}\) denotes the
two-category of categories with the two-morphisms reversed.

In traditional notation we get a correspondence as follows:
  \[\raisebox{-.45\height}{\input{\figdir/H1_3_1.pstex_t}}\qquad\text{gives rise to}\qquad\raisebox{-.45\height}{\input{\figdir/H1_3_2.pstex_t}}.\]
To get this into the string pictures we will adopt the useful
convention that a shaded region means that it is the opposite
category, and the functors and natural transformations will be the
opposites of their labels.  Thus
  \[\raisebox{-.45\height}{\input{\figdir/H1_3_3.pstex_t}}\quad\text{gives rise to}\quad\raisebox{-.45\height}{\input{\figdir/H1_3_4.pstex_t}}
   \quad\text{denoted}\quad\raisebox{-.45\height}{\input{\figdir/H1_3_5.pstex_t}}.\]
Note the essential difference that ``shaded'' natural transformations are ``turned
upside-down''.  When we get to dinatural transformations, the shading
can be given the interpretation of the `other-side' of the surface.

This notational convention means that a contravariant functor
\({}^\vee\) on a category \(\CC\) can be denoted in the following way:
    \[\raisebox{-.45\height}{\input{\figdir/H1_3_6.pstex_t}}.\]
Note that the conditions needed to be satisfied by a duality on a
monoidal category will be stated in terms of dinatural
transformations, so we will not do that properly until later.

\section{Monads and bimonads}\label{Section:MonadsAndBimonads}
In this section we will look at monads and bimonads using string
diagrams.  In particular we consider the category of modules over a
monad from this perspective; this can be seen as a `formal' or
two-categorical point of view in that we discuss the category of
modules over a monad on a category without talking about the internal
structure of the category.  We go on to look at bimonads and how the
cateogry of modules in this case is monoidal, analogous to the
category of modules for a bialgebra.  Finally we will see how a
bimonad arises from a pair of adjoint functors.

\subsection{Monads}
This will be a quick diagrammatic recap on monads.  A monad on a
category \(\CC\) is an endofunctor \(T\colon \CC\to \CC\) together
with natural transformations \(\mu\colon T^2\nattrans T\) and
\(\iota\colon \Id_\CC\nattrans T\), known as the multiplication and unit and
drawn as
  \[\mu\equiv\raisebox{-.45\height}{\input{\figdir/H2_1_1.pstex_t}}\qquad\text{and}\qquad\iota\equiv\raisebox{-.45\height}{\begin{picture}(0,0)%
\includegraphics{\figdir/H2_1_2}%
\end{picture}%
\setlength{\unitlength}{4144sp}%
\begingroup\makeatletter\ifx\SetFigFont\undefined%
\gdef\SetFigFont#1#2{%
  \fontsize{#1}{#2pt}%
  \selectfont}%
\fi\endgroup%
\begin{picture}(924,765)(169,-343)
\put(586,209){\rotatebox{360.0}{\makebox(0,0)[rb]{\smash{{\SetFigFont{8}{9.6}{\color[rgb]{0,0,0}$T$}%
}}}}}
\end{picture}%
}.\]
These have to satisfy the associativity and unit laws, namely
  \begin{gather}
    \raisebox{-.45\height}{\begin{picture}(0,0)%
\includegraphics{\figdir/H2_1_3}%
\end{picture}%
\setlength{\unitlength}{4144sp}%
\begingroup\makeatletter\ifx\SetFigFont\undefined%
\gdef\SetFigFont#1#2{%
  \fontsize{#1}{#2pt}%
  \selectfont}%
\fi\endgroup%
\begin{picture}(924,786)(169,-364)
\put(586,209){\rotatebox{360.0}{\makebox(0,0)[rb]{\smash{{\SetFigFont{8}{9.6}{\color[rgb]{0,0,0}$T$}%
}}}}}
\end{picture}%
}=\raisebox{-.45\height}{\begin{picture}(0,0)%
\includegraphics{\figdir/H2_1_4}%
\end{picture}%
\setlength{\unitlength}{4144sp}%
\begingroup\makeatletter\ifx\SetFigFont\undefined%
\gdef\SetFigFont#1#2{%
  \fontsize{#1}{#2pt}%
  \selectfont}%
\fi\endgroup%
\begin{picture}(924,786)(1069,-364)
\put(1396,209){\makebox(0,0)[lb]{\smash{{\SetFigFont{8}{9.6}{\color[rgb]{0,0,0}$T$}%
}}}}
\end{picture}%
};\mytag{Monad1}\label{Monad1}\\
    \raisebox{-.45\height}{\input{\figdir/H2_1_5.pstex_t}}=\raisebox{-.45\height}{\begin{picture}(0,0)%
\includegraphics{\figdir/H2_1_6}%
\end{picture}%
\setlength{\unitlength}{4144sp}%
\begingroup\makeatletter\ifx\SetFigFont\undefined%
\gdef\SetFigFont#1#2{%
  \fontsize{#1}{#2pt}%
  \selectfont}%
\fi\endgroup%
\begin{picture}(924,786)(1069,-364)
\put(1396,209){\makebox(0,0)[lb]{\smash{{\SetFigFont{8}{9.6}{\color[rgb]{0,0,0}$T$}%
}}}}
\end{picture}%
}=\raisebox{-.45\height}{\input{\figdir/H2_1_7.pstex_t}}.\mytag{Monad2}\label{Monad2}
  \end{gather}
There is an associated category \(T\CC\) of \emph{\(T\)-modules}, this
is sometimes written
\(\CC^T\).  The objects of this category
are pairs \(\bigl(m,\ \left(r\colon T(m)\to m\right)\bigr)\) where \(m\) is an object of
\(\CC\), such that the diagrams
\[\xymatrix{
T\circ T(m)\ar[r]^{Tr}
\ar[d]_{\mu_m}
&T(m)
\ar[d]^r
\\
T(m)
\ar[r]^r
&m
}
\quad\text{and}\quad \xymatrix{ m\ar[r]^{\iota_m} \ar[dr]_{\id_m}
  &T(m) \ar[d]^r
  \\
  &m}
\]
both commute,
   as one would expect from anything befitting the name `module'.  The
   morphisms in the category of \(T\)-modules are morphisms between
   the underlying objects of \(\CC\) that commute with the
   \(T\)-action.  The example that I have in mind here is where
   \(\CC\) is a monoidal category, \(A\) is a unital algebra object in
   \(\CC\) and \(T\) is the monad \(A\otimes{-}\); in this case the
   category of \(T\)-modules is precisely the category of
   \(A\)-modules in the usual sense.

We fit the category of modules into the graphical calculus by
considering associated functors and natural transformations.  An
object of the
category of modules consists of an object of \(\CC\)  and an action
morphisms, an alternative view of such a pair is that we have a
forgetful functor \(U_T\colon T\CC\to \CC\) given
   by \(U_T(m,r):=m\), which just forgets the action, and we have a
   natural transformation \(\rho\colon T\circ U_T\nattrans U_T\)
   defined by \(\rho_{(m,r)}:=r\) which encodes the action.  We will
   denote the forgetful functor $U_T$ by a dashed-dotted line and draw
 the natural transformation \(\rho\) as follows.
\[\rho\equiv\raisebox{-.45\height}{\input{\figdir/H2_1_21.pstex_t}}\]
The module conditions above become the following:
  \begin{gather}
    \raisebox{-.45\height}{\input{\figdir/H2_1_22.pstex_t}}=\raisebox{-.45\height}{\input{\figdir/H2_1_23.pstex_t}};\mytag{Module1}\label{module1}\\
    \raisebox{-.45\height}{\input{\figdir/H2_1_24.pstex_t}}=\raisebox{-.45\height}{\begin{picture}(0,0)%
\includegraphics{\figdir/H2_1_25}%
\end{picture}%
\setlength{\unitlength}{4144sp}%
\begingroup\makeatletter\ifx\SetFigFont\undefined%
\gdef\SetFigFont#1#2{%
  \fontsize{#1}{#2pt}%
  \selectfont}%
\fi\endgroup%
\begin{picture}(924,744)(1069,-343)
\put(1644,-240){\makebox(0,0)[lb]{\smash{{\SetFigFont{8}{9.6}{\color[rgb]{0,0,0}$U_T$}%
}}}}
\end{picture}%
}.\mytag{Module2}\label{module2}
  \end{gather}

We also have a
free module functor \(F_T\colon \CC\to T\CC\).  This is given on objects by
 \[ F_T(x):= \left(T(x),\ \left(\mu_{T(x)}\colon T(T(x))\to T(x)\right)\right).\]
Note that \(T\) is precisely the composite \(U_T\circ F_T\),
so we have identity natural transformations $\Id\colon U_T\circ
F_T\stackrel\sim\nattrans T$ and $\Id \colon T\stackrel\sim\nattrans
U_T\circ F_T$ which we draw as
 \[\raisebox{-.45\height}{\input{\figdir/H2_1_8.pstex_t}}\qquad\text{and}\qquad\raisebox{-.45\height}{\input{\figdir/H2_1_9.pstex_t}}.\]
Here it should be observed that the graphical language fails to
distinguish between identity natural transformations and natural
isomorphisms.

Note that the multiplication $\mu$ on $T$ is recovered from these
identifications together with the action natural transformation $\rho$
in the following way:
  \[\raisebox{-.45\height}{\input{\figdir/H2_1_1.pstex_t}}=\raisebox{-.45\height}{\input{\figdir/H2_1_11.pstex_t}}.\]

\subsection{Monads from adjoint functors}
A standard way of obtaining monads is via pairs of adjoint functors.
Suppose that $F$ and $U$ form such a pair, \(F\dashv U\), then
\(U\circ F\colon \CC\to\CC\) forms a monad.  The multiplication and
unit of the monad are obtained from the unit and counit of the
adjunction in the following easily drawn fashion:
  \[\mu\equiv\raisebox{-.45\height}{\input{\figdir/H2_2_1.pstex_t}};\qquad \iota\equiv\raisebox{-.45\height}{\input{\figdir/H2_2_2.pstex_t}}.\]
If the reader has not seen this before then they should immediately
verify diagrammatically that the axioms of a monad are satisfied.

It should be noted that every monad \(T\) actually arises in this way,
as the composite of a left and a right adjoint; for example,
there is an adjunction \(F_T\dashv U_T\) between the free and
forgetful functors described above.  In general there will be
several different adjoint decompositions of a monad.

\subsection{Bimonads}\label{section:bimonads}
We now bring monads and monoidal categories together.  Suppose that
\(T\colon \CC\to\CC\) is a monad on a monoidal category
\((\CC,\otimes,\one)\).  We can then ask the question ``Under what
circumstances does the monoidal structure on \(\CC\) lift to a monoidal structure on the category of \(T\)-modules \(T\CC\)?''  Or, we could ask the weaker and less precise question ``Given two \(T\)-modules \(\left(m,\left(r\colon T(m)\to m\right)\right)\) and
\(\left(m',\left(r'\colon T(m')\to m'\right)\right)\) how do we obtain a natural \(T\)-module structure
on \(m\otimes m'\)?''

The answer to the first question (and hence the second) was found by Moerdijk, but before stating the answer we should introduce the following piece of terminology.
\begin{defn} A monad \(T\colon\CC\to \CC\) on a monoidal category has
  a \emph{bimonad} (or \emph{opmonoidal monad}) structure if the
  functor \(T\) has an opmonoidal structure with respect to which both
  the multiplication
  \(\mu\) and the unit \(\iota\) are opmonoidal natural
  transformations (in the sense of Section~\ref{Section:OpMonNT}).
\end{defn}
Bimonads are so named because of the analogy with bialgebras given by
Theorem~\ref{Thm:Moerdijk} below, though they were called Hopf monads
in his original paper.  Before stating the Theorem it is worth
unpacking this rather concise definition a
little.  A monad has a bimonad structure if firstly there is an
opmonoidal structure for $T$, that is there are specified
natural transformation
\[\sigma_2^T\colon \otimes\circ (T\times
T)\nattrans T\circ \otimes
\quad\text{and} \quad
\sigma_0^T\colon T\circ \one \nattrans \one,\]
 drawn as follows, in which hopefully it is clear where the hidden
 lines go.
\[\sigma_2^T\equiv\raisebox{-.45\height}{\begin{picture}(0,0)%
\includegraphics{\figdir/H2_3_21}%
\end{picture}%
\setlength{\unitlength}{4144sp}%
\begingroup\makeatletter\ifx\SetFigFont\undefined%
\gdef\SetFigFont#1#2{%
  \fontsize{#1}{#2pt}%
  \selectfont}%
\fi\endgroup%
\begin{picture}(814,1040)(1630,-4547)
\end{picture}%
}, \qquad
\sigma_0^T\equiv\raisebox{-.45\height}{\begin{picture}(0,0)%
\includegraphics{\figdir/H2_3_22}%
\end{picture}%
\setlength{\unitlength}{4144sp}%
\begingroup\makeatletter\ifx\SetFigFont\undefined%
\gdef\SetFigFont#1#2{%
  \fontsize{#1}{#2pt}%
  \selectfont}%
\fi\endgroup%
\begin{picture}(440,674)(2932,-4354)
\end{picture}%
}\] These natural transformations must
satisfy the axioms (\textsf{Opmondl1--3}) and the multiplication and
unit must be opmonoidal with respect to this which means  that
the following must hold:
  \[\raisebox{-.45\height}{\begin{picture}(0,0)%
\includegraphics{\figdir/H2_3_13}%
\end{picture}%
\setlength{\unitlength}{4144sp}%
\begingroup\makeatletter\ifx\SetFigFont\undefined%
\gdef\SetFigFont#1#2{%
  \fontsize{#1}{#2pt}%
  \selectfont}%
\fi\endgroup%
\begin{picture}(814,1040)(1630,-4547)
\end{picture}%
}=\raisebox{-.45\height}{\begin{picture}(0,0)%
\includegraphics{\figdir/H2_3_14}%
\end{picture}%
\setlength{\unitlength}{4144sp}%
\begingroup\makeatletter\ifx\SetFigFont\undefined%
\gdef\SetFigFont#1#2{%
  \fontsize{#1}{#2pt}%
  \selectfont}%
\fi\endgroup%
\begin{picture}(814,1040)(1630,-4547)
\end{picture}%
};
  \qquad
  \raisebox{-.45\height}{\begin{picture}(0,0)%
\includegraphics{\figdir/H2_3_15}%
\end{picture}%
\setlength{\unitlength}{4144sp}%
\begingroup\makeatletter\ifx\SetFigFont\undefined%
\gdef\SetFigFont#1#2{%
  \fontsize{#1}{#2pt}%
  \selectfont}%
\fi\endgroup%
\begin{picture}(814,1040)(1630,-4547)
\end{picture}%
}=\raisebox{-.45\height}{\begin{picture}(0,0)%
\includegraphics{\figdir/H2_3_16}%
\end{picture}%
\setlength{\unitlength}{4144sp}%
\begingroup\makeatletter\ifx\SetFigFont\undefined%
\gdef\SetFigFont#1#2{%
  \fontsize{#1}{#2pt}%
  \selectfont}%
\fi\endgroup%
\begin{picture}(814,1040)(1630,-4547)
\end{picture}%
}\mytag{BM1--2}\label{BM1};
  \]
\[\raisebox{-.45\height}{\begin{picture}(0,0)%
\includegraphics{\figdir/H2_3_24}%
\end{picture}%
\setlength{\unitlength}{4144sp}%
\begingroup\makeatletter\ifx\SetFigFont\undefined%
\gdef\SetFigFont#1#2{%
  \fontsize{#1}{#2pt}%
  \selectfont}%
\fi\endgroup%
\begin{picture}(439,678)(2932,-4358)
\end{picture}%
}=\raisebox{-.45\height}{\begin{picture}(0,0)%
\includegraphics{\figdir/H2_3_23}%
\end{picture}%
\setlength{\unitlength}{4144sp}%
\begingroup\makeatletter\ifx\SetFigFont\undefined%
\gdef\SetFigFont#1#2{%
  \fontsize{#1}{#2pt}%
  \selectfont}%
\fi\endgroup%
\begin{picture}(439,678)(2932,-4358)
\end{picture}%
}; \qquad \raisebox{-.45\height}{\begin{picture}(0,0)%
\includegraphics{\figdir/H2_3_25}%
\end{picture}%
\setlength{\unitlength}{4144sp}%
\begingroup\makeatletter\ifx\SetFigFont\undefined%
\gdef\SetFigFont#1#2{%
  \fontsize{#1}{#2pt}%
  \selectfont}%
\fi\endgroup%
\begin{picture}(439,654)(2932,-4334)
\end{picture}%
}=\raisebox{-.45\height}{\begin{picture}(0,0)%
\includegraphics{\figdir/H2_3_26}%
\end{picture}%
\setlength{\unitlength}{4144sp}%
\begingroup\makeatletter\ifx\SetFigFont\undefined%
\gdef\SetFigFont#1#2{%
  \fontsize{#1}{#2pt}%
  \selectfont}%
\fi\endgroup%
\begin{picture}(439,654)(2932,-4334)
\end{picture}%
}.
\mytag{BM3--4}\label{BM2}\]
The result of Moerdijk can now be stated.
\begin{thm}[Moerdijk \cite{Moerdijk:MonadsTensorCategories}]
  \label{Thm:Moerdijk}
  Let \(T\) be a monad on a monoidal category \(\CC\) and let \(T\CC\)
  be its category of modules.  Then specifying a
  lift of the monoidal structure on \(\CC\) to a monoidal structure on
  \(T\CC\) is precisely the same as
  specifying a bimonad structure on \(T\).
\end{thm}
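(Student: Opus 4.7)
The plan is to establish a bijection between bimonad structures on $T$ and monoidal lifts to $T\CC$ by giving direct constructions in each direction and then checking that they are mutually inverse. Throughout, I will argue diagrammatically, exploiting the correspondence between monoidal and opmonoidal structures on adjoint pairs from Section~\ref{Subsection:MonoidalFunctors} and the fact that a ``lift'' means precisely that $U_T$ is strict monoidal.

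For the direction bimonad $\Rightarrow$ lift, given $\sigma_2^T$ and $\sigma_0^T$, define
\[
(m,r)\,\tilde\otimes\,(m',r') := \bigl(m\otimes m',\ (r\otimes r')\circ(\sigma_2^T)_{m,m'}\bigr),\qquad \tilde\one := (\one,\sigma_0^T).
\]
I would then verify four things in string diagrams. First, that these are honest $T$-modules: the associativity \handtag{module1} of the tensor action falls out of \handtag{BM1} combined with naturality of $\sigma_2^T$ and the module axioms for $r,r'$; the unit axiom \handtag{module2} uses \handtag{BM3}; the checks for $\tilde\one$ use \handtag{BM2} and \handtag{BM4}. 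Second, that $\tilde\otimes$ is functorial on module morphisms --- immediate from naturality of $\sigma_2^T$. Third, that the associator and unitors of $\CC$ lift to $T$-module morphisms --- this is exactly the content of \handtag{Opmondl1--3} applied to the actions. Fourth, pentagon and triangle on $T\CC$ are inherited from $\CC$ since $U_T$ is faithful on the underlying data.

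For the direction lift $\Rightarrow$ bimonad, since $U_T$ is strict monoidal and $F_T\dashv U_T$, the construction of Section~\ref{Subsection:MonoidalFunctors} endows $F_T$ with an opmonoidal structure; applying $U_T$ then defines
\[
\sigma_2^T := U_T(\sigma_2^{F_T}),\qquad \sigma_0^T := U_T(\sigma_0^{F_T}),
\]
and the axioms \handtag{Opmondl1--3} are inherited. To see that $\iota$ and $\mu$ are opmonoidal natural transformations, I would use the descriptions $\iota \equiv \eta$ and $\mu \equiv U_T\rho F_T$ from Section~2.1--2.2 expressed diagrammatically: for $\iota$, diagrammatic \handtag{BM3--4} follows from the opmonoidality of $\eta$ viewed via the bijection between monoidal $U_T$ and opmonoidal $F_T$; for $\mu$, \handtag{BM1--2} follows because $\rho$ commutes with the tensor action by construction of $\tilde\otimes$, together with functoriality of $\tilde\otimes$.

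Finally, I would verify that the two procedures are mutually inverse. Starting from a bimonad, building the lift, and then re-extracting $\sigma_2^T$ from the opmonoidal structure on $F_T$ recovers the original data by a direct computation using the formula $F_T(m)\,\tilde\otimes\,F_T(m') = \bigl(T(m)\otimes T(m'),\,(\mu_m\otimes\mu_{m'})\circ\sigma_2^T\bigr)$ together with the unit axiom for $\mu$. In the other direction, the lift is forced on all modules because every $T$-module is a (reflexive) coequaliser of free modules, and the tensor product must preserve this presentation. The main obstacle I anticipate is precisely this last inverse-check: although each of the axiom verifications is essentially a short surface-diagram manipulation using naturality and the bimonad/module axioms in sequence, the round trip requires a careful argument that the lift is fully determined by its value on free modules. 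The rest reduces to mechanical diagrammatic bookkeeping, which, as the paper emphasises, is the main advantage of working in the three-dimensional string calculus.
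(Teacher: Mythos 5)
Your overall strategy --- explicit constructions in both directions followed by a check that they are mutually inverse --- is sound and would yield Moerdijk's theorem, but it is organised quite differently from the proof in the paper. The paper never constructs the two assignments separately and so never has to verify that they are inverse: it factors the statement through Proposition~\ref{Lemma:LiftingToModules}, which says that a lift of a functor to \(T\CC\) \emph{is}, bijectively, a natural transformation satisfying the two module conditions \bref{R12}, and then through the mate bijection \(\Nat(G\circ U_T,H\circ U_T)\cong\Nat(G,H\circ T)\) coming from \(F_T\dashv U_T\) and \(T=U_T\circ F_T\). Under this composite of bijections a lift of \(\otimes\) corresponds exactly to a natural transformation \(\sigma_2^T\colon T\circ\otimes\nattrans\otimes\circ(T\times T)\) satisfying \handtag{BM1--2}, a lift of \(\one\) to a \(\sigma_0^T\) satisfying \handtag{BM3--4}, and the opmonoidal coherence axioms \handtag{Opmondl1--3} match the coherence data of the lifted monoidal structure. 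Your forward construction \((m,r)\tilde\otimes(m',r'):=\bigl(m\otimes m',(r\otimes r')\circ(\sigma_2^T)_{m,m'}\bigr)\) is precisely the image of \(\sigma_2^T\) under the paper's bijection, and your backward construction via the opmonoidal structure on \(F_T\) is its inverse, so the two proofs agree in substance; what the paper's formulation buys is that the opmonoidality of \(\mu\) and \(\iota\) and the ``round trip'' come for free rather than needing separate verification.

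That round trip is also the one place where your argument, as written, has a genuine gap. You propose to show that a lift is determined by its values on free modules because every \(T\)-module is a reflexive coequaliser of free modules ``and the tensor product must preserve this presentation''. A lifted tensor product has no reason to preserve these coequalisers: nothing in the hypotheses makes \(\otimes\) on \(\CC\), let alone \(\otimes^T\) on \(T\CC\), compatible with colimits. Fortunately no such preservation is needed. For modules \((m,r)\) and \((m',r')\), the maps \(r\colon F_T(m)\to(m,r)\) and \(r'\colon F_T(m')\to(m',r')\) are module morphisms by \bref{module1}, so naturality of the lifted action transformation \(\rho_\otimes\) of Proposition~\ref{Lemma:LiftingToModules} applied to the pair \((r,r')\), together with the unit axiom \bref{Monad2}, already expresses the action on \(m\otimes m'\) in terms of its value on \(F_T(m)\otimes^T F_T(m')\), i.e.\ in terms of \(\sigma_2^T\); this is exactly the content of the mate bijection. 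With that repair (or by adopting the paper's bijective formulation from the outset) your proof goes through.
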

We will spend the rest of this section seeing why this is true.  To do
this, the following is an extremely useful observation.
%
\begin{prop}\label{Lemma:LiftingToModules}
  For \(T\colon \CC\to \CC\) a monad, \(T\CC\) its category of modules
  and \(\DD\) any category, specifying a functor \(H\colon \DD\to
  T\CC\) is precisely the same as specifying a functor
  \(\pre{U}H\colon \DD\to \CC\) and a natural transformation
  \(\rho_H\colon T\circ \pre{U}H\nattrans \pre{U}H\), drawn as
  \[\raisebox{-.45\height}{\input{\figdir/H2_3_1.pstex_t}},\]
  such that the following two relations are satisfied:
  \[
  \raisebox{-.45\height}{\input{\figdir/H2_3_2.pstex_t}}=\raisebox{-.45\height}{\input{\figdir/H2_3_3.pstex_t}}
  \quad\text{and}\quad
  \raisebox{-.45\height}{\input{\figdir/H2_3_4.pstex_t}}=\raisebox{-.45\height}{\begin{picture}(0,0)%
\includegraphics{\figdir/H2_3_5}%
\end{picture}%
\setlength{\unitlength}{4144sp}%
\begingroup\makeatletter\ifx\SetFigFont\undefined%
\gdef\SetFigFont#1#2{%
  \fontsize{#1}{#2pt}%
  \selectfont}%
\fi\endgroup%
\begin{picture}(924,764)(1069,-353)
\put(1644,-240){\makebox(0,0)[lb]{\smash{{\SetFigFont{8}{9.6}{\color[rgb]{0,0,0}$\pre{U}H$}%
}}}}
\end{picture}%
}.
  \mytag{R1--2}\label{R12}
  \]
\end{prop}
\begin{proof}[Proof (sketch)]
  This is just the fact that \(T\CC\) consists of pairs \((m,r)\),
  where \(m\in \CC\) and \(r\colon T(m)\to m\) is an action.  So given
  such a functor \(H\) define \(\pre{U}H\) to be \(U_T\circ H\) and
  \(\rho^H\colon T\circ U_T\circ H\nattrans U_T\circ H\) to be \(\rho\circ
  \id_H\).
  Conversely, given such a pair, define \(H(d):=\left(\pre{U}H(d),\ \left(
  \rho^H_d\colon T(\pre{U}H(d))\to \pre{U}H(d)\right)\right)\).
\end{proof}

Now we will see how to prove Moerdijk's Theorem.  To lift the monoidal
structure of \(\CC\) to a monoidal structure on \(T\CC\) we need to
specify the tensor product and unit on \(T\CC\), together with the
associativity and unital natural transformations.  We will concentrate
on the tensor product.

For  a functor \({\otimes^T}\colon \CC\times T\CC\to T\CC\) to be a
lift of a  functor \({\otimes}\colon \CC\times \CC\to \CC\) (with
respect to the forgetful functor \(U_T\colon T\CC \to \CC\))  the
following diagram must commute.
\[\xymatrix{
T\CC\times T\CC
\ar[d]_{U_T\times U_T}
\ar[r]^{\otimes^T}
&
T\CC
\ar[d]^{U_T}
\\
\CC\times\CC
\ar[r]^{\otimes}
&\CC
}
\]
Algebraically this means that
\(U_T\circ \otimes^T = {\otimes}\circ (U_T\times U_T)\), so in the
notation of Proposition~\ref{Lemma:LiftingToModules} we have \(\pre{U}\otimes^T{}={\otimes}\circ (U_T\times
U_T)\). By the proposition then, lifting the functor \({\otimes}\) is precisely the same as specifying
a natural transformation \[\rho_{\otimes}\colon T\circ {\otimes}\circ (U_T\times
U_T)\nattrans {\otimes}\circ (U_T \times U_T)\]
which satisfies the two conditions of the lemma.
(Intuitively this says that to specify a lifted tensor product of two
modules we need to specify an action of $T$ on the tensor product of
the two objects underlying the modules.)
We draw the natural transformation as
  \[\rho_\otimes\equiv\raisebox{-.45\height}{\begin{picture}(0,0)%
\includegraphics{\figdir/H2_3_6}%
\end{picture}%
\setlength{\unitlength}{4144sp}%
\begingroup\makeatletter\ifx\SetFigFont\undefined%
\gdef\SetFigFont#1#2{%
  \fontsize{#1}{#2pt}%
  \selectfont}%
\fi\endgroup%
\begin{picture}(1461,1351)(1630,-4895)
\end{picture}%
},\]
In general for an adjunction \(F\dashv U\),  and with $G$ and $H$
functors with suitable source and target, there is a bijection between
sets of natural transformations:
  \[
  \Nat(G,H\circ F)
  \cong \Nat(G\circ U, H).
  \]
Hence, because there is the adjunction \(F_T\dashv
U_T\) and the monad factorizes as \(T=U_T\circ F_T\), for any functors $G,H\colon \CC\to\DD$ there are the following
identifications of sets of natural transformations
  \[
  \Nat(G,H\circ T)
  =\Nat(G, H\circ U_T\circ F_T)
  \cong \Nat(G\circ U_T, H\circ U_T).
  \]
In the diagrammatic notation, the isomorphism between the two outside sets is given by
 \[\raisebox{-.45\height}{\input{\figdir/H2_3_7.pstex_t}}\mapsto \raisebox{-.45\height}{\input{\figdir/H2_3_8.pstex_t}}\qquad\text{and}\qquad
  \raisebox{-.45\height}{\input{\figdir/H2_3_9.pstex_t}}\mapsto \raisebox{-.45\height}{\input{\figdir/H2_3_10.pstex_t}}.\]
Similarly there is an identification
  \[
  \Nat(T\circ{\otimes},{\otimes}\circ(T\times T))
  \cong
  \Nat(T\circ{\otimes}\circ(U_T\times U_T),{\otimes}\circ(U_T\times U_T))
  \]
so the natural transformation \(\rho_{\otimes}\) above is equivalent to a natural transformation
  \[\raisebox{-.45\height}{}\equiv\raisebox{-.45\height}{\begin{picture}(0,0)%
\includegraphics{\figdir/H2_3_12}%
\end{picture}%
\setlength{\unitlength}{4144sp}%
\begingroup\makeatletter\ifx\SetFigFont\undefined%
\gdef\SetFigFont#1#2{%
  \fontsize{#1}{#2pt}%
  \selectfont}%
\fi\endgroup%
\begin{picture}(883,1035)(1630,-4547)
\end{picture}%
}\]
satisfying the two conditions \bref{R12} of the proposition, but these
are clearly axioms \bref{BM1}.
(Thus we have a natural transformation \(T\circ \otimes \nattrans
\otimes \circ(T\times T)\) which is used to write down an action on
the tensor product, as if we have $T$-modules \((m,r)\) and
\((m',r')\) then we will define the action on \(m\otimes m'\) by
\(T(m\otimes m')\to Tm\otimes Tm'\xleftarrow{r\otimes r'} m\otimes
m'.\))

Having lifted the tensor product we also need to lift the unit
$\one\colon \pnt\to \CC$; this gives the morphism $\sigma_0^T$
satisfying \handtag{BM3--4}.  Now for these two functors to combine to
give a tensor product structure they must give an opmonoidal structure
for $T$.  This gives Theorem~\ref{Thm:Moerdijk}.  Full details are
found in \cite{Moerdijk:MonadsTensorCategories}.


\subsection{Bimonads from adjoint functors}
\label{Subsection:BimonadsFromAdjoint}
Suppose now that \(U\colon \DD\to \CC \) is a strong monoidal functor
between monoidal categories, so we have the following pairs of inverse natural transformations:
 \[\raisebox{-.45\height}{\input{\figdir/H2_4_1.pstex_t}}\qquad\text{and}\qquad\raisebox{-.45\height}{\input{\figdir/H2_4_2.pstex_t}};\]
 \[\raisebox{-.45\height}{\begin{picture}(0,0)%
\includegraphics{\figdir/H2_4_11}%
\end{picture}%
\setlength{\unitlength}{4144sp}%
\begingroup\makeatletter\ifx\SetFigFont\undefined%
\gdef\SetFigFont#1#2{%
  \fontsize{#1}{#2pt}%
  \selectfont}%
\fi\endgroup%
\begin{picture}(609,664)(979,-353)
\put(1313,-74){\makebox(0,0)[rb]{\smash{{\SetFigFont{8}{9.6}{\color[rgb]{0,0,0}$U$}%
}}}}
\end{picture}%
}\qquad\text{and}\qquad\raisebox{-.45\height}{\begin{picture}(0,0)%
\includegraphics{\figdir/H2_4_12}%
\end{picture}%
\setlength{\unitlength}{4144sp}%
\begingroup\makeatletter\ifx\SetFigFont\undefined%
\gdef\SetFigFont#1#2{%
  \fontsize{#1}{#2pt}%
  \selectfont}%
\fi\endgroup%
\begin{picture}(609,664)(979,-343)
\put(1313,-74){\makebox(0,0)[rb]{\smash{{\SetFigFont{8}{9.6}{\color[rgb]{0,0,0}$U$}%
}}}}
\end{picture}%
}.\]
And suppose that \(U\) has a left adjoint \(F\colon\CC\to\DD\) then, as
mentioned in Section~\ref{Subsection:MonoidalFunctors}, \(F\) is opmonoidal; and as \(U\) is also opmonoidal,
so the composite \(U\circ F\) is opmonoidal, with the opmonoidal
structure given explicitly as follows:
 \[\sigma^{U\circ F}_2\equiv\raisebox{-.45\height}{\input{\figdir/H2_4_3.pstex_t}}; \qquad \sigma^{U\circ F}_0\equiv\raisebox{-.45\height}{\input{\figdir/H2_4_13.pstex_t}}.\]
Of course, \(U\circ F\) is also a monad, and it is very easy to see in
this pictorial language that it is an opmonoidal monad, ie.\ that the
product and unit of the monad are opmonoidal transformations.

For instance, the following proves that \handtag{BM1} holds:
 \begin{align*}
   \raisebox{-.45\height}{\input{\figdir/H2_4_4.pstex_t}}
   &=\raisebox{-.45\height}{\input{\figdir/H2_4_5.pstex_t}}=\raisebox{-.45\height}{\input{\figdir/H2_4_6.pstex_t}}\\
   &=\raisebox{-.45\height}{\input{\figdir/H2_4_7.pstex_t}}.
 \end{align*}
Thus in the case that \(U\) is strongly monoidal, \(U\circ F\) is a
bimonad.

Note that if \(T\) is a bimonad, then the category \(T\CC\) of
\(T\)-modules is monoidal and in the decomposition \(T=U_T\circ F_T\)
the forgetful functor \(U_T\) is strongly monoidal.

\section{The diagrammatics of dinatural transformations}
In this section we introduce dinatural transformations so that we can
give a diagrammatic description of duality on a monoidal category, the
point being that evaluation and coevaluation are dinatural rather than
natural transformations.

\subsection{Motivating example: a monoidal category with duals}
The first question to address is ``What is an appropriate notion of a
monoidal category with duals?''  To simplify the situation, we
will just consider \emph{left} duals: right duals can be handled
similarly.  So suppose that \((\CC,\otimes,\one)\) is a monoidal
category, a left duality on \(\CC\) will be a functor \({}^\vee\colon
\CC^\op\to \CC\) together with evaluation and coevaluation maps for
every object \(a\) in the category,
\[\ev_a\colon \prevee a\otimes a\to \one
 \quad\text{and}\quad
 \coev_a\colon \one \to a\otimes \prevee a,\]
such that for every morphism \(f\colon a\to a'\) in the category
the following naturality conditions hold:
\[
 \ev_a\circ (\prevee f\otimes \id)
   =\ev_{a'}\circ ( \id\otimes f)
 \quad\text{and}\quad
 (f\otimes \id)\circ \coev_{a}
   =( \id\otimes \prevee f)\circ\coev_{a'},
\]
and such that the following ``snake'' relations hold,
 \begin{align*}(\id_a\otimes \ev_a)\circ (\coev_a\otimes \id_a)&=\id_a\\
 (\ev_a\otimes \id_{\prevee a}) \circ  (\id_{\prevee a}\otimes \coev_a)
 & =\id_{\prevee a.}
 \end{align*}

One would like to interpret \(\ev\) and \(\coev\) as some sort of natural
transformations, so that, for instance, \(\ev\) would be a natural
transformation from the ``functor'' \(\CC\to\CC\) given by
\(a\mapsto \prevee a\otimes
a\) to the functor \(\CC\to\CC\) given by \(a\mapsto \one\): however the former is \emph{not} a functor.  So we
consider the functor \(\CC^\op\times \CC\to \CC;\ (b,a)\mapsto
\prevee b \otimes a\) and the functor \(\one\colon \pnt\to \CC\).
Evaluation is then
a family of morphisms \(\ev_a\colon \prevee a\otimes a\to \one\), indexed by the
objects of \(\CC\), such that the following diagrams commutes:
 \[\xymatrix{
  &\prevee a\otimes a\ar[dr]^{\ev_a}\\
   \prevee a'\otimes a\ar[ur]^{\prevee f\otimes \id_a}
    \ar[dr]_{\id_{a'}\otimes f }
   &&\quad\one\quad.\\
   &\prevee a'\otimes a'\ar[ur]_{\ev_{a'}}
  }\]

Now, because, amongst other reasons, we also want to deal with
coevaluation \(\coev_a\colon \one \to a\otimes \prevee a\), and the
constraint \((\id_a\otimes \ev_a)\circ (\coev_a\otimes \id_a)=\id_a\),
we introduce the more general notion of dinatural transformation.

\subsection{Dinatural transformations}
Motivated by the above example, we are led to Eilenberg and Kelly's
notion of dinatural transformation.
Suppose
\[P\colon \CC\times \CC^\op\times \AA\to \BB
 \quad\text{and}\quad
 Q\colon \AA\times \DD^\op\times \DD\to \BB\]
are two functors, then a \emph{dinatural transformation} \(\beta\colon
 P\dinat Q\) is a family of morphisms
\(\beta_{c,a,d}\colon P(c,c,a)\to Q(a,d,d)\) which satisfies
 the following naturality condition.  If $f\colon a\to a'$, $g\colon c\to c'$ and $h\colon d \to d'$ are morphisms in \(\AA\), \(\CC\) and \(\DD\) respectively then the diagram below commutes.
\[\xymatrix{
  &P(c,c,a')\ar[r]^{\beta_{c,a',d}}&
  Q(a',d,d)\ar[dr]^{Q(\id_{a'},\id_d,h)}\\
  P(c,c',a)\ar[ur]^{P(\id_c,g,f)}\ar[dr]_{P(g,\id_c,\id_a)}&&&
  Q(a',d,d')\\
  &P(c',c',a)\ar[r]^{\beta_{c',a,d'}}&
  Q(a,d',d')\ar[ur]_{Q(f,h,\id_{d'})}
  }\]

Note that each of the categories \(\AA\), \(\CC\), \(\DD\) and \(\DD\) can be products of
other categories or can indeed be the terminal category \(\pnt\) and can
all be permuted in the definition.  Thus the
case of evaluation described above occurs when \(\CC=\BB\) and
\(\AA=\DD=\pnt\), and a usual natural transformation is the case where
\(\CC=\DD=\pnt\).

We can then denote such a dinatural transformation as follows.  For functors
\[P\colon \CC\times \CC^\op\times \AA\to \BB
 \quad\text{and}\quad
 Q\colon \AA\times \DD^\op\times \DD\to \BB\]
a dinatural transformation \(\beta\colon
 P\dinat Q\) is denoted
 \[\raisebox{-.45\height}{\input{\figdir/H3_2_1.pstex_t}},\]
where as usual the diagram is read upwards.

The first thing to note is that in the case of a natural
transformation, when \(\CC=\DD=\pnt\), we recover the usual string
diagram notation.

The next thing to note is the right-hand profile of the surface.  This
is the so-called Eilenberg-Kelly graph of the dinatural
transformation, consisting of arcs with the end-points of an arc
labelled by the same category.  These graphs are important in the
composition of dinatural transformations as we will see below.

It can also be pointed out that a dinatural transformation can
actually be written as a natural transformation.  For example a
dinatural transformation \(\beta\colon P\dinat Q\) as above is
equivalent to a natural transformation
\[  \Hom_\CC(-,-) \times\Hom_\AA(-,-)\times
\Hom_\DD(-,-) \nattrans \Hom_\BB\left(P({-},{-},{-}),Q({-},{-},{-})\right)\]
between functors from \(\CC^\op\times \CC\times\AA^\op\times \AA\times
\DD^\op\times \DD\) to \(\Set\), so this also allows a diagrammatic
description, but it is rather messier and the composition is not as
straightforward.

\subsection{Vertical composition}
In order to make sense of the snake condition on \(\ev\) and \(\coev\)
we will need to define the vertical composition of dinatural
transformations.
If \(\beta'\colon P\dinat Q\) and \(\beta\colon Q\dinat
R\) are two dinatural transformations then the composite
\(\beta\circ\beta'\) can not always be defined; however the composite
can be defined,
resulting in a dinatural transformation, in the case that the composite
of the Eilenberg-Kelly graphs contains no loops, as I will now
explain.

This is best illustrated by our motivating example.
If we have a monoidal category \(\CC\) with a functor \(\prevee\colon
\CC^\op\to \CC\) and dinatural transformations
\(\ev\colon\prevee\otimes \Id_\CC\dinat \one\) and
\(\coev\colon\one\dinat \Id_\CC\otimes\prevee\), where, for instance \(\prevee\otimes \Id_\CC\) means \({\otimes}\circ(\Id_\CC\times \prevee)\). Then we can draw these dinatural transformations as
 \[\ev\equiv \raisebox{-.45\height}{\begin{picture}(0,0)%
\includegraphics{\figdir/H4_1_12b}%
\end{picture}%
\setlength{\unitlength}{4144sp}%
\begingroup\makeatletter\ifx\SetFigFont\undefined%
\gdef\SetFigFont#1#2{%
  \fontsize{#1}{#2pt}%
  \selectfont}%
\fi\endgroup%
\begin{picture}(1464,1089)(2679,-3068)
\end{picture}%
}\qquad \coev\equiv \raisebox{-.45\height}{\begin{picture}(0,0)%
\includegraphics{\figdir/H4_1_14b}%
\end{picture}%
\setlength{\unitlength}{4144sp}%
\begingroup\makeatletter\ifx\SetFigFont\undefined%
\gdef\SetFigFont#1#2{%
  \fontsize{#1}{#2pt}%
  \selectfont}%
\fi\endgroup%
\begin{picture}(1468,943)(3391,-3353)
\end{picture}%
}.\]
We then have a dinatural transformation
\(\id\otimes\ev\colon\Id_\CC\otimes (\prevee\otimes \Id_\CC)\dinat
\Id_\CC\) whose components are given by \( (\id\otimes
\ev)_{a'',a}:=\id_{a''}\otimes\ev_a\colon a''\otimes(\prevee a \otimes
a)\to a''\). This should be drawn with binary tensor products,
but, by identifying \({\otimes}\circ(\otimes\times \Id_\CC)\) with a
triple tensor product \({-}\otimes{-}\otimes{-}\), it will be drawn as
\[\id\otimes\ev\equiv\raisebox{-.45\height}{\begin{picture}(0,0)%
\includegraphics{\figdir/H3_2_2}%
\end{picture}%
\setlength{\unitlength}{4144sp}%
\begingroup\makeatletter\ifx\SetFigFont\undefined%
\gdef\SetFigFont#1#2{%
  \fontsize{#1}{#2pt}%
  \selectfont}%
\fi\endgroup%
\begin{picture}(1557,1017)(2681,-2867)
\end{picture}%
}.\]
Similarly we have \(\coev\otimes\id\colon \Id_\CC\dinat (\Id_\CC\otimes \prevee)\otimes \Id_\CC\) with its components given by \([\coev\otimes\id]_{a',a}:=\coev_{a'}\otimes \id_a\colon a\to (a'\otimes \prevee (a'))\otimes a\) which is drawn as
 \[\coev\otimes\id\equiv\raisebox{-.45\height}{\begin{picture}(0,0)%
\includegraphics{\figdir/H3_2_3}%
\end{picture}%
\setlength{\unitlength}{4144sp}%
\begingroup\makeatletter\ifx\SetFigFont\undefined%
\gdef\SetFigFont#1#2{%
  \fontsize{#1}{#2pt}%
  \selectfont}%
\fi\endgroup%
\begin{picture}(1557,992)(2686,-3589)
\end{picture}%
}.\]
The  two  dinatural transformations pictured above can be vertically composed to give a dinatural transformation, which is actually a natural transformation, \(\id\otimes \ev)\circ(\coev\otimes \id)\colon \Id_\CC\nattrans\Id_\CC\)
given by \[[(\id\otimes\ev)\circ(\coev\otimes \id)]_a:=
  (\id\otimes\ev)_{a,a}\circ(\coev\otimes \id)]_{a,a}\colon
   a\to a\]
and drawn as
\[\raisebox{-.45\height}{\begin{picture}(0,0)%
\includegraphics{\figdir/H3_2_4}%
\end{picture}%
\setlength{\unitlength}{4144sp}%
\begingroup\makeatletter\ifx\SetFigFont\undefined%
\gdef\SetFigFont#1#2{%
  \fontsize{#1}{#2pt}%
  \selectfont}%
\fi\endgroup%
\begin{picture}(1099,1224)(2681,-3074)
\end{picture}%
}.\]
One of the conditions for \(({}^\vee, \ev, \coev)\) to form a duality on the monoidal category $\CC$ is that the above should be the identity natural transformation.  The two conditions are drawn in the next subsection, below.
%
%
%
%

More generally, we can define the composite of two dinatural
transformations provided the composite Eilenberg-Kelly graph has no
loops.  For example we can form a dinatural transformation from two
dinatural transformations of the following form.  Suppose we have
functors
\[P\colon\pt\to \BB,\quad
R\colon \CC\times \CC^\op\to \BB,\quad
Q\colon \CC\times\CC^\op\times\CC\times\CC^\op\to \BB
\]
together with dinatural transformations \(\beta' \colon P\dinat Q\) and
\(\beta\colon Q\dinat R\) which pair up the categories as pictured
below, then there is a composite \(\beta\circ\beta'\colon P\dinat R\).
 \[\begin{matrix}\raisebox{-.45\height}{\input{\figdir/H3_3_7.pstex_t}}\\\pstex{H3_3_8}\end{matrix}
    \mapsto \raisebox{-.45\height}{\input{\figdir/H3_3_9.pstex_t}}.\]
However, we can not form a dinatural transformation from the composite
\(\ev\circ \coev\) as we would get a loop in the Eilenberg-Kelly graph
as can be seen here:
 \[\raisebox{-.45\height}{\input{\figdir/H3_3_10.pstex_t}}.\]
See 
\cite{EilenbergKelly} for more details.

\subsection{Definition of a monoidal category with left duals}
We can now state the definition of a monoidal category with left duals in this language.  Suppose that $\CC$ is a monoidal category, ${}^\vee\colon \CC^{\op}\to \CC$ is a functor and $\ev\colon {}^\vee\otimes \Id_\CC\nattrans \Id_\CC$ and $\coev\colon \Id_\CC \nattrans  \Id_\CC\otimes{}^\vee$ are dinatural transformations drawn as
\[\ev\equiv \raisebox{-.45\height}{}\qquad \coev\equiv \raisebox{-.45\height}{}.\]
Then $({}^\vee,\ev,\coev)$ forms a \emph{left duality} of $\CC$ if the following snake relations hold:
\begin{align*}\raisebox{-.45\height}{}&=\raisebox{-.45\height}{\begin{picture}(0,0)%
\includegraphics{\figdir/H3_2_5}%
\end{picture}%
\setlength{\unitlength}{4144sp}%
\begingroup\makeatletter\ifx\SetFigFont\undefined%
\gdef\SetFigFont#1#2{%
  \fontsize{#1}{#2pt}%
  \selectfont}%
\fi\endgroup%
\begin{picture}(842,1219)(2684,-3068)
\end{picture}%
};\mytag{Duality1}\label{Duality1}\\
\raisebox{-.45\height}{\begin{picture}(0,0)%
\includegraphics{\figdir/H3_3_5}%
\end{picture}%
\setlength{\unitlength}{4144sp}%
\begingroup\makeatletter\ifx\SetFigFont\undefined%
\gdef\SetFigFont#1#2{%
  \fontsize{#1}{#2pt}%
  \selectfont}%
\fi\endgroup%
\begin{picture}(1099,1223)(2680,-3072)
\end{picture}%
}&=\raisebox{-.45\height}{\begin{picture}(0,0)%
\includegraphics{\figdir/H3_3_6}%
\end{picture}%
\setlength{\unitlength}{4144sp}%
\begingroup\makeatletter\ifx\SetFigFont\undefined%
\gdef\SetFigFont#1#2{%
  \fontsize{#1}{#2pt}%
  \selectfont}%
\fi\endgroup%
\begin{picture}(850,1223)(2680,-3072)
\end{picture}%
}.\mytag{Duality2}\label{Duality2}\end{align*}

\subsection{Composition with natural transformations}

\renewcommand{\tilde}{\widetilde}
Suppose that \(F\colon \tilde \AA\to \AA\),  \(G\colon \tilde \CC\to
\CC\),  \(H\colon \tilde \DD\to \DD\), and  \(K\colon \BB\to
\tilde\BB\) are functors  and that \(\beta \colon P\dinat Q\) is a
dinatural transformation of the above form then we get a dinatural
transformation
 \[\tilde \beta\colon K\circ P\circ (G\times G^\op\times F)\dinat
     K\circ Q\circ (F\times H^\op\times H),\]
given by
\(\tilde\beta_{\tilde a, \tilde c, \tilde d}:=K(\beta_{F(\tilde a),
  G(\tilde c), H(\tilde d)})\).
This is denoted graphically as
 \[\raisebox{-.45\height}{\input{\figdir/H3_4_1.pstex_t}}.\]

Note that in the case that \(\beta\) is an ordinary natural
transformation this recovers the ordinary horizontal composition of
natural transformations.

Suppose now that we have
natural transformations
 \[\raisebox{-.45\height}{\input{\figdir/H3_4_2.pstex_t}},\quad\raisebox{-.45\height}{\input{\figdir/H3_4_3.pstex_t}},\quad\raisebox{-.45\height}{\input{\figdir/H3_4_4.pstex_t}}\quad
    \text{and}\quad\raisebox{-.45\height}{\input{\figdir/H3_4_5.pstex_t}},\]
together with
a dinatural transformation
\(\beta \colon P\dinat Q\)
 of the above form
then the following dinatural
transformations are equal:
 \[\raisebox{-.45\height}{\input{\figdir/H3_4_7.pstex_t}}\quad=\quad\raisebox{-.45\height}{\input{\figdir/H3_4_8.pstex_t}}.\]
This just follows from the definitions.  In traditional notation it is
quite a mess to write down, thus this does show how nicely the
diagrammatics capture the essence of composition of dinatural
transformations.

\section{Hopf monads}

In this section we use the diagrammatic language to first give
Brugui\`eres and Virilizier's definition of a Hopf monad and a minor
simplification of their proof that such thing is equivalent to a lift
of duals to the module category. We then use the diagrammatics to be
more explicit than them in their example of a monad coming from a
strongly monoidal functor with a left adjoint.
\subsection{Hopf monads}

The difference between a bialgebra and a Hopf algebra is that the
latter has an antipode.  In the current context, the principal
consequence of this is that the vector space dual of a module over a
Hopf algebra carries a canonical action.  More precisely,
the duality on the base category of vector spaces lifts to a duality
on the category of modules.  It is this property that we wish to
examine for monads, and we can do this by asking the question ``What
structure is required of a bimonad on a monoidal category with duals
so that the category of modules has a lift of the duals?''  This was
answered by Brugui\`eres and Virelizier
\cite{BruguieresVirelizier:Hopf} and here we will describe their
solution in the diagrammatic language, something that they themselves
would have liked to have done.

Brugui\`eres and Virelizier \cite{BruguieresVirelizier:Hopf} gave the
definition of `left antipode'.  We will restate this definition using
the diagrammatic notation developed above.
\begin{defn} If \(T\) is a bimonad on a monoidal category with left
  duals then a \emph{left antipode} for \(T\) is a natural
  transformation \(\SSS\colon T\circ {}^\vee\circ T\nattrans {}^\vee\),
  denoted as follows,
\[\SSS\equiv \raisebox{-.45\height}{\begin{picture}(0,0)%
\includegraphics{\figdir/H4_1_2}%
\end{picture}%
\setlength{\unitlength}{4144sp}%
\begingroup\makeatletter\ifx\SetFigFont\undefined%
\gdef\SetFigFont#1#2{%
  \fontsize{#1}{#2pt}%
  \selectfont}%
\fi\endgroup%
\begin{picture}(924,763)(1069,-362)
\put(1127,-253){\makebox(0,0)[lb]{\smash{{\SetFigFont{8}{9.6}{\color[rgb]{0,0,0}$T$}%
}}}}
\end{picture}%
},\]
which satisfies the following two relations:
\begin{align*}
\raisebox{-.45\height}{\begin{picture}(0,0)%
\includegraphics{\figdir/H4_1_12}%
\end{picture}%
\setlength{\unitlength}{4144sp}%
\begingroup\makeatletter\ifx\SetFigFont\undefined%
\gdef\SetFigFont#1#2{%
  \fontsize{#1}{#2pt}%
  \selectfont}%
\fi\endgroup%
\begin{picture}(1464,1101)(2679,-3080)
\end{picture}%
}&=\raisebox{-.45\height}{\begin{picture}(0,0)%
\includegraphics{\figdir/H4_1_13}%
\end{picture}%
\setlength{\unitlength}{4144sp}%
\begingroup\makeatletter\ifx\SetFigFont\undefined%
\gdef\SetFigFont#1#2{%
  \fontsize{#1}{#2pt}%
  \selectfont}%
\fi\endgroup%
\begin{picture}(1464,1096)(2679,-3075)
\end{picture}%
};\mytag{HM1}\label{HM1}\\
\raisebox{-.45\height}{\begin{picture}(0,0)%
\includegraphics{\figdir/H4_1_14}%
\end{picture}%
\setlength{\unitlength}{4144sp}%
\begingroup\makeatletter\ifx\SetFigFont\undefined%
\gdef\SetFigFont#1#2{%
  \fontsize{#1}{#2pt}%
  \selectfont}%
\fi\endgroup%
\begin{picture}(1468,943)(3391,-3353)
\end{picture}%
}&=\raisebox{-.45\height}{\begin{picture}(0,0)%
\includegraphics{\figdir/H4_1_15}%
\end{picture}%
\setlength{\unitlength}{4144sp}%
\begingroup\makeatletter\ifx\SetFigFont\undefined%
\gdef\SetFigFont#1#2{%
  \fontsize{#1}{#2pt}%
  \selectfont}%
\fi\endgroup%
\begin{picture}(1468,943)(3391,-3353)
\end{picture}%
}.\mytag{HM2}\label{HM2}
\end{align*}
Here the dinatural transformation parts of the diagrams are the
evaluation and coevaluation of the duality on the category.

A bimonad equipped with a left antipode is called a (left) \emph{Hopf monad}.
\end{defn}

The question asked above is then fully answered by the following
theorem which tells us that, in a certain specific sense, Hopf monads
are analogous to Hopf algebras.
\begin{thm}[Brugui\`eres and Virelizier
  \cite{BruguieresVirelizier:Hopf}]\label{thm:antipodeduality}
  Suppose that \(T\) is a bimonad on a monoidal category \(\CC\) with
  a left duality and that \(T\CC\) is the monoidal category of
  \(T\)-modules. Then specifying a lift of the left duality on \(\CC\)
  to a left duality on \(T\CC\) is the same as specifying a left
  antipode for \(T\).
\end{thm}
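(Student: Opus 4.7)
The plan is to mirror the proof of Moerdijk's Theorem~\ref{Thm:Moerdijk}. A lift of the left duality $({}^\vee,\ev,\coev)$ from $\CC$ to $T\CC$ breaks into two independent pieces of data: a lift of the contravariant functor ${}^\vee$ to a functor $T\CC^{\op}\to T\CC$ covering ${}^\vee$ along $U_T$, together with dinatural transformations $\ev^T$ and $\coev^T$ on $T\CC$ that lift $\ev,\coev$ and satisfy the snake relations \bref{Duality1}, \bref{Duality2}.

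For the functor part I would apply the opposite-category analogue of Proposition~\ref{Lemma:LiftingToModules}, taking $\DD=T\CC^{\op}$ and observing that, to cover ${}^\vee$ along $U_T$, the underlying functor is forced to be ${}^\vee\circ U_T^{\op}$. Specifying the lift is then equivalent to specifying a natural transformation
\[ \rho^\vee\colon T\circ{}^\vee\circ U_T^{\op}\nattrans {}^\vee\circ U_T^{\op}, \]
subject to the two module conditions \bref{R12}. Just as in the proof of Moerdijk's theorem in Section~\ref{section:bimonads}, the $F_T\dashv U_T$ adjunction converts this bijectively into a natural transformation of the same shape as the required antipode,
\[ \SSS\colon T\circ{}^\vee\circ T\nattrans{}^\vee, \]
which is our candidate. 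Concretely, the action on $\prevee m$ for a module $(m,r)$ is reconstructed from $\SSS$ as $\SSS_m\circ T\prevee(r)$, and $\SSS$ is recovered from $\rho^\vee$ by restriction to free modules.

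For the dinaturals, the snake relations \bref{Duality1}, \bref{Duality2} in $T\CC$ descend from those in $\CC$ via the faithfulness of $U_T$ on underlying data, so the nontrivial condition is that $\ev^T_{(m,r)}$ and $\coev^T_{(m,r)}$ are morphisms of $T$-modules. The $T$-action on the unit module is $\sigma_0^T$, and the action on $(m,r)\otimes^T(m',r')$ is built from $\sigma_2^T$ together with the individual actions, by Moerdijk's theorem. Unpacking the $T$-module intertwining property of $\ev^T_{(m,r)}$ diagrammatically, and specializing to a free module---so that $r=\mu$ and the action on $\prevee m$ is the free-module restriction of $\rho^\vee$---reduces to \bref{HM1}. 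The analogous unpacking for $\coev^T$ reduces to \bref{HM2}. Conversely, given $\SSS$ satisfying these, naturality together with the monad axioms \bref{Monad1}, \bref{Monad2} delivers the module conditions \bref{R12} on $\rho^\vee$ and the intertwining of $\ev$ and $\coev$ at arbitrary, not merely free, modules.

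The principal obstacle is the bookkeeping in the previous paragraph: each intertwining equation simultaneously involves the monad data $(\mu,\iota)$, the bimonad data $(\sigma_2^T,\sigma_0^T)$, the action $\rho^\vee$ on duals and the duality dinaturals within a single three-dimensional picture. The payoff of the diagrammatic calculus of Sections~\ref{Section:MonadsAndBimonads} and~3 is exactly that, after a short manipulation of Eilenberg--Kelly graphs, this calculation reduces to reading off \bref{HM1} and \bref{HM2} directly as the $T$-module compatibility of $\ev$ and $\coev$ at the universal free module.
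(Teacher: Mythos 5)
Your overall architecture coincides with the paper's: you decompose a lift of the duality into (i) a lift of the contravariant functor \({}^\vee\), which via the opposite-category version of Proposition~\ref{Lemma:LiftingToModules} and the adjunction \(F_T\dashv U_T\) is traded for a natural transformation \(\SSS\colon T\circ{}^\vee\circ T^\op\nattrans{}^\vee\) subject to two module conditions (the paper's \bref{HM0}), and (ii) the requirement that \(\ev\) and \(\coev\) be \(T\)-module maps, which after stripping off \(U_T\) becomes \bref{HM1} and \bref{HM2}. Up to that point your proposal matches the paper's Lemma and Theorem~\ref{thm:HM1evalHM2coeval}.

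The gap is in the last step. A left antipode is defined by \bref{HM1} and \bref{HM2} alone, so to obtain the stated bijection you must show that the module conditions \bref{HM0} on the induced action on duals come for free. You assert that ``naturality together with the monad axioms'' delivers them, but that is not true: \bref{HM0} is the monad analogue of the statement that a Hopf-algebra antipode is an algebra anti-homomorphism preserving the unit, and that is not a formal consequence of naturality. The paper isolates this as Theorem~\ref{Theorem:HM1HM2impliesHM0}, whose proof is the most delicate computation in the section: it first establishes an auxiliary identity \((\dagger)\) using the bimonad axioms \handtag{BM1} and \handtag{BM3}, both monad axioms, and three applications of \bref{HM2}, and then combines \((\dagger)\) with \bref{HM1} and two applications of the snake relation \bref{Duality2} on \(\CC\). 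Without an argument of this kind your correspondence only identifies lifts of the duality with natural transformations satisfying \bref{HM0}, \bref{HM1} and \bref{HM2} jointly, which is a priori a smaller set than the set of left antipodes, so the theorem as stated does not yet follow.
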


The rest of this section will consist of a diagrammatic proof of the
above theorem.  We essentially translate Brugui\`eres and Virelizier's
proof into our diagrammatic language, with some minor simplification
making the proof more transparent.

We begin with a lemma similar to results in Section~\ref{section:bimonads} on bimonads.
\begin{lemma}
  If \(T\) is a monad on a category $\CC$ and ${}^\vee\colon\CC^\op\to
  \CC$ is a functor then lifts of this to a functor ${}^\wedge\colon
  T\CC^\op\to T\CC$ on the category of modules correspond to natural
  transformations \( T\circ {}^\vee\circ T\nattrans {}^\vee\), drawn
  as
\[\raisebox{-.45\height}{},\]
which satisfy
\[
  \raisebox{-.45\height}{\begin{picture}(0,0)%
\includegraphics{\figdir/H4_1_6}%
\end{picture}%
\setlength{\unitlength}{4144sp}%
\begingroup\makeatletter\ifx\SetFigFont\undefined%
\gdef\SetFigFont#1#2{%
  \fontsize{#1}{#2pt}%
  \selectfont}%
\fi\endgroup%
\begin{picture}(924,765)(1069,-364)
\put(1127,-253){\makebox(0,0)[lb]{\smash{{\SetFigFont{8}{9.6}{\color[rgb]{0,0,0}$T$}%
}}}}
\end{picture}%
}=\raisebox{-.45\height}{\begin{picture}(0,0)%
\includegraphics{\figdir/H4_1_7}%
\end{picture}%
\setlength{\unitlength}{4144sp}%
\begingroup\makeatletter\ifx\SetFigFont\undefined%
\gdef\SetFigFont#1#2{%
  \fontsize{#1}{#2pt}%
  \selectfont}%
\fi\endgroup%
\begin{picture}(924,765)(1069,-364)
\end{picture}%
};
  \quad\text{and}\quad
  \raisebox{-.45\height}{\begin{picture}(0,0)%
\includegraphics{\figdir/H4_1_8}%
\end{picture}%
\setlength{\unitlength}{4144sp}%
\begingroup\makeatletter\ifx\SetFigFont\undefined%
\gdef\SetFigFont#1#2{%
  \fontsize{#1}{#2pt}%
  \selectfont}%
\fi\endgroup%
\begin{picture}(924,763)(1069,-362)
\end{picture}%
}=\raisebox{-.45\height}{\begin{picture}(0,0)%
\includegraphics{\figdir/H4_1_9}%
\end{picture}%
\setlength{\unitlength}{4144sp}%
\begingroup\makeatletter\ifx\SetFigFont\undefined%
\gdef\SetFigFont#1#2{%
  \fontsize{#1}{#2pt}%
  \selectfont}%
\fi\endgroup%
\begin{picture}(924,763)(1069,-362)
\end{picture}%
}.
  \mytag{HM0}\label{HM0}
\]
\end{lemma}
\begin{proof}
The functor ${}^\wedge$ being a lift of the functor ${}^\vee$ means
that the following diagram commutes.
\[
{\xymatrix
{
T\CC^\op
  \ar[r]^{{}^\wedge}
  \ar[d]_{U_T^\op}
  &T\CC
  \ar[d]^{U_T}\\
  \CC^\op
  \ar[r]^{{}^\vee}
  & \CC
}}\]
By
Lemma~\ref{Lemma:LiftingToModules}, specifying a functor
 ${}^\wedge\colon
  T\CC^\op\to T\CC$
is the same as specifying a functor  $U_T\circ{}^\wedge\colon
  T\CC^\op\to T\CC$ and a  natural transformation \(T\circ U_T\circ
{}^\wedge\nattrans U_T\circ {}^\wedge\) satisfying the two module
conditions.  But as the above diagram commutes, we know that  \(U_T\circ
{}^\wedge= {}^\vee\circ U_T^\op\), so we just need to specify a
natural transformation
 \(T\circ {}^\vee\circ U_T^\op\nattrans
{}^\vee\circ U_T^\op\), drawn as
 \[\raisebox{-.45\height}{\input{\figdir/H4_1_1.pstex_t}},\]
and which satisfies the two module conditions.  Analogously to the bimonad
case we can use the identity \(T=U_T\circ F_T\) and the reversed
adjunction \(U_T^\op\dashv F_T^\op\) to obtain a bijection
\[
  \Nat(G\circ U_T^\op, H\circ U_T^\op)\cong
  \Nat(G\circ T^\op,H).
\]
So the above natural transformation corresponds to a natural
transformation \(\SSS\colon T\circ {}^\vee\circ T^\op\nattrans {}^\vee\):
\[
  \SSS \equiv\raisebox{-.45\height}{}:=\raisebox{-.45\height}{\input{\figdir/H4_1_3.pstex_t}}.
\]
The original natural transformation is recovered in the following way:
\[
  \raisebox{-.45\height}{\input{\figdir/H4_1_1.pstex_t}}=\raisebox{-.45\height}{\begin{picture}(0,0)%
\includegraphics{\figdir/H4_1_5}%
\end{picture}%
\setlength{\unitlength}{4144sp}%
\begingroup\makeatletter\ifx\SetFigFont\undefined%
\gdef\SetFigFont#1#2{%
  \fontsize{#1}{#2pt}%
  \selectfont}%
\fi\endgroup%
\begin{picture}(924,768)(1069,-362)
\put(1127,-253){\makebox(0,0)[lb]{\smash{{\SetFigFont{8}{9.6}{\color[rgb]{0,0,0}$T$}%
}}}}
\end{picture}%
}.
\]
Concretely, we can recover the lift \({}^\wedge\) from \(\SSS \) via
\[
    \pre{\wedge}(m,r)=\left(\prevee m,\ \SSS _{m}\circ T(\prevee
    r)\right).
\]
The two module conditions translate to \bref{HM0} as required.
\end{proof}
In order to show that such a lift of a left duality is itself a left duality we need to show that \(\ev\) and
\(\coev\) define \(T\)-module maps.  This is precisely where
\handtag{HM1} and \handtag{HM2} come into play.
\begin{thm}\label{thm:HM1evalHM2coeval}
  Suppose that \(T\) is a bimonad on a monoidal category \(\CC\) with
  duals, and that \(\SSS \colon T\circ {}^\vee\circ T^\op\nattrans
  {}^\vee\) is a natural transformation satisfying the conditions
  \bref{HM0}, so it gives rise to a functor \({}^\wedge\colon
  T\CC^\op\to T\CC\).
\begin{enumerate}
\item The evaluation dinatural transformation on \(\CC\) lifts to a
  dinatural transformation on the module category \(T\CC\) if and only
  if \bref{HM1} is satisfied.
\item The coevaluation dinatural transformation on \(\CC\)
  lifts to a dinatural transformation on the module category \(T\CC\)
  if and only if \bref{HM2} is satisfied.
\end{enumerate}
\end{thm}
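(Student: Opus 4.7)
My plan is to combine Proposition~\ref{Lemma:LiftingToModules} with the identification of the induced $T$-module structures forced by the bimonad structure of $T$ and by the natural transformation $\SSS$ of the preceding lemma. Since $\ev$ is a dinatural transformation in $\CC$, it lifts to a dinatural transformation on $T\CC$ precisely when each component $\ev_m$ is a $T$-module morphism between the source and target with their induced actions, and similarly for $\coev$; this is just the statement that $\ev_m$, viewed as a morphism in $\CC$, commutes with the $T$-action on source and target.

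For part~(1), Theorem~\ref{Thm:Moerdijk} identifies the unit of $T\CC$ as $(\one,\sigma^T_0)$ and the lifted tensor product $(m_1,r_1)\otimes^T(m_2,r_2)$ as having action $(r_1\otimes r_2)\circ(\sigma^T_2)_{m_1,m_2}$. Combining these with ${}^\wedge(m,r)=\bigl(\prevee m,\,\SSS_m\circ T(\prevee r)\bigr)$ from the preceding lemma, I obtain that the source ${}^\wedge(m,r)\otimes^T(m,r)$ has underlying object $\prevee m\otimes m$ with action $\bigl((\SSS_m\circ T(\prevee r))\otimes r\bigr)\circ(\sigma^T_2)_{\prevee m,m}$, while the target is $(\one,\sigma^T_0)$. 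Writing out the diagrammatic equation that $\ev_m$ must satisfy to be a $T$-module map for these two actions yields a condition parametrised by $(m,r)$.

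To eliminate the parameter $r$ I use the same mechanism as in Section~\ref{section:bimonads}: specialise the condition to the universal free module $F_T(x)=(Tx,\mu_x)$, then apply the adjunction bijection $\Nat(G\circ U_T,H\circ U_T)\cong\Nat(G\circ T,H)$. After a diagrammatic simplification using \bref{HM0}, the unit axiom \bref{Monad2}, and dinaturality of $\ev$, the resulting equation matches \bref{HM1} exactly. Conversely, starting from \bref{HM1}, I recover the module-map condition for a general $(m,r)$ by precomposing with $T(\prevee r)\otimes r$ and using dinaturality of $\ev$ together with \bref{HM0}, so that the equation for the free module propagates back. Part~(2) is strictly parallel: the source becomes $(\one,\sigma^T_0)$, the target becomes $(m,r)\otimes^T{}^\wedge(m,r)$ with the analogous induced action, and the same free-module reduction turns the module-map condition on $\coev_m$ into \bref{HM2}.

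The main obstacle will be the diagrammatic bookkeeping in the middle step: correctly drawing the induced action on $\prevee m\otimes m$ (respectively $m\otimes\prevee m$) so that one strand carries $\SSS_m\circ T(\prevee r)$ and the other carries $r$ in the right order, and then applying the adjunction bijection so that the universal form of the condition lands on \bref{HM1} (respectively \bref{HM2}) on the nose, rather than on some equivalent but syntactically different reshuffling that would require extra bimonad axioms to recognise. The conceptual content is already contained in the lifting principle plus the formula for ${}^\wedge(m,r)$; once the diagrams are set up correctly the verification is mechanical.
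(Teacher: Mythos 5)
Your proposal is correct and follows essentially the same route as the paper: the lift of $\ev$ (resp.\ $\coev$) is characterised by its components being $T$-module maps for the induced actions built from $\SSS$ and $\sigma^T_2$, $\sigma^T_0$, and the parameter $r$ is then eliminated by the adjunction/free-module bijection $\Nat(G\circ U_T,H\circ U_T)\cong\Nat(G\circ T,H)$ (your ``specialise to $F_T(x)$ then apply the bijection'' is exactly the paper's ``remove $U_T$'' step), after which a diagrammatic rearrangement lands on \bref{HM1} (resp.\ \bref{HM2}). The only cosmetic difference is that you treat the converse direction separately, whereas the paper gets both directions at once from the bijection being a bijection.
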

\begin{proof}
  Consider the evaluation case.  For \(\ev\) to lift to an evaluation
  on \(T\CC\) its components must be maps of \(T\)-modules, that is
  they must commute with the \(T\)-action, so for each \(T\)-module
  \((m,r)\in T\CC\) the following diagram must commute.
  \[
  \xymatrix{
    T(\prevee m\otimes m) \ar[d]\ar[r]^{T\ev_m}
    &T(\one)\ar[d]
    \\
    \prevee m \otimes m \ar[r]^{\ev_m}
    &\one
  }
  \]
  Here, of course, the action on \(\prevee m \otimes m\) is using
  \(\SSS\) and the bimonad structure.

  In terms of the diagrammatic calculus this means that the following
  must hold:
  \[\raisebox{-.45\height}{\begin{picture}(0,0)%
\includegraphics{\figdir/H4_1_18}%
\end{picture}%
\setlength{\unitlength}{4144sp}%
\begingroup\makeatletter\ifx\SetFigFont\undefined%
\gdef\SetFigFont#1#2{%
  \fontsize{#1}{#2pt}%
  \selectfont}%
\fi\endgroup%
\begin{picture}(1464,1089)(2679,-3068)
\end{picture}%
}=\raisebox{-.45\height}{\begin{picture}(0,0)%
\includegraphics{\figdir/H4_1_19}%
\end{picture}%
\setlength{\unitlength}{4144sp}%
\begingroup\makeatletter\ifx\SetFigFont\undefined%
\gdef\SetFigFont#1#2{%
  \fontsize{#1}{#2pt}%
  \selectfont}%
\fi\endgroup%
\begin{picture}(1464,1089)(2679,-3068)
\end{picture}%
}.\]
  Now we can use the same machinery as before to remove \(U_T\) from
  the statement: namely using we have a bijection
  \[\text{Dinat}\left(G\circ(U_T^\op \times U_T), H\right)
    \cong
    \text{Dinat}\left(G\circ(T^\op \times \Id), H\right).
  \]
  Via this correspondence, the above equality becomes
  \[\raisebox{-.45\height}{\begin{picture}(0,0)%
\includegraphics{\figdir/H4_1_20}%
\end{picture}%
\setlength{\unitlength}{4144sp}%
\begingroup\makeatletter\ifx\SetFigFont\undefined%
\gdef\SetFigFont#1#2{%
  \fontsize{#1}{#2pt}%
  \selectfont}%
\fi\endgroup%
\begin{picture}(1464,1099)(2679,-3078)
\end{picture}%
}=\raisebox{-.45\height}{\begin{picture}(0,0)%
\includegraphics{\figdir/H4_1_21}%
\end{picture}%
\setlength{\unitlength}{4144sp}%
\begingroup\makeatletter\ifx\SetFigFont\undefined%
\gdef\SetFigFont#1#2{%
  \fontsize{#1}{#2pt}%
  \selectfont}%
\fi\endgroup%
\begin{picture}(1464,1099)(2679,-3078)
\end{picture}%
},\]
  Moving things `over the top' this condition is seen to be equivalent to
  \[\raisebox{-.45\height}{\begin{picture}(0,0)%
\includegraphics{\figdir/H4_1_22}%
\end{picture}%
\setlength{\unitlength}{4144sp}%
\begingroup\makeatletter\ifx\SetFigFont\undefined%
\gdef\SetFigFont#1#2{%
  \fontsize{#1}{#2pt}%
  \selectfont}%
\fi\endgroup%
\begin{picture}(1464,1099)(2679,-3078)
\end{picture}%
}=\raisebox{-.45\height}{\begin{picture}(0,0)%
\includegraphics{\figdir/H4_1_23}%
\end{picture}%
\setlength{\unitlength}{4144sp}%
\begingroup\makeatletter\ifx\SetFigFont\undefined%
\gdef\SetFigFont#1#2{%
  \fontsize{#1}{#2pt}%
  \selectfont}%
\fi\endgroup%
\begin{picture}(1464,1099)(2679,-3078)
\end{picture}%
},\]
  which is, by the properties of \(U_T\) and \(F_T\) from
  Section~\ref{Section:MonadsAndBimonads}, is just
  \[\raisebox{-.45\height}{}=\raisebox{-.45\height}{}\]
  and this \bref{HM1} as required.

  The coevaluation case is similar.
\end{proof}
  We have now seen that if \(T\) is a bimonad on a monoidal category with
  duals then specifying a lift of the duality on \(\CC\) to a duality
  on \(T\CC\) the category of \(T\)-modules is equivalent to
  specifying a natural transformation \( T\circ {}^\vee\circ
  T^\op\nattrans {}^\vee\) such that conditions \bref{HM0}, \bref{HM1} and \bref{HM2}
  are satisfied.  To prove Theorem~\ref{thm:antipodeduality} we just need to see that \bref{HM0} is actually a redundant condition.
\begin{thm}
  \label{Theorem:HM1HM2impliesHM0}
  Suppose that \(T\) is a bimonad on a monoidal category with duals,
  then any natural transformation  \( T\circ {}^\vee\circ
  T^\op\nattrans {}^\vee\) which satisfies \bref{HM1} and \bref{HM2} also
  satisfies \bref{HM0}.
\end{thm}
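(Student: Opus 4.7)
The plan is to derive each of the two module-style relations of (HM0) from (HM1) and (HM2) by a snake-and-substitute argument, analogous to the classical Hopf-algebra proof that the antipode axioms force $S$ to be an algebra anti-homomorphism. Each target relation in (HM0) will be obtained by bending the picture with a snake identity to introduce a pair $\ev,\coev$, applying (HM1) on the $\ev$-side and/or (HM2) on the $\coev$-side to trade $\SSS$-data for $T$-multiplication data, invoking the appropriate bimonad axiom to collapse the resulting $T$-diagram, and finally closing up with the snake again.

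For the associativity-type relation in (HM0) I would start with the side carrying $\SSS\circ T(\SSS)$ and insert a snake to expose a fresh $\ev$--$\coev$ pair sitting on the ${}^\vee$-strand. The $\ev$ that has just been introduced recognises an instance of (HM1), which replaces the local $\SSS$-plus-opmonoidal-$T$ configuration by one built purely from $\ev$ together with $\mu$ and the opmonoidal structure. The bimonad axioms \bref{BM1} (opmonoidality of $\mu$) and \bref{Monad1} (associativity of $\mu$) then let me reassociate the $T$-strands, after which (HM2) consumes the $\coev$-end of the snake in the dual manner. One last snake collapse produces the other side of (HM0), giving the associativity relation for $\SSS$.

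For the unit-type relation in (HM0) the argument is a simpler specialisation: insert a snake to expose an $\ev$, rewrite via (HM1), use the unit relation \bref{Monad2} together with the opmonoidality of $\iota$ from \bref{BM2} to eliminate the $\mu$ and $T(\iota)$ strands, and then close with the snake identity to recover the required equation. No appeal to (HM2) is needed for this half, just as in the Hopf-algebra case where the unit-compatibility of $S$ follows directly from one antipode axiom.

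The main obstacle is organisational rather than conceptual. The diagrams involve three simultaneously interleaving families of strands (for $T$, for ${}^\vee$, and for the ambient category $\CC$), and the snake identities act in the ${}^\vee$-direction while the bimonad axioms act in the $T$-direction. One must check at every intermediate step that the inserted pairs $\ev$--$\coev$ keep the Eilenberg--Kelly graph loop-free so that the composition remains a well-defined dinatural transformation, and that the suppressed associators and unitors (hidden by the coherence conventions of Section~1) are really compatible with the rearrangements being made. Once the diagrammatic bookkeeping is set up, the three-dimensional string calculus reduces each step to a visually transparent rewrite, and the proof is a bounded diagram chase.
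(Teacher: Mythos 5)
Your proposal is correct and follows essentially the same route as the paper: the paper's proof is exactly a snake-and-substitute argument, using \bref{Duality2} to introduce and later collapse an $\ev$--$\coev$ pair, trading the $\SSS$-data via \bref{HM1} (the identity $(\dagger\dagger)$) and via an auxiliary identity $(\dagger)$ derived from \bref{HM2} together with the bimonad axioms \handtag{BM1}, \handtag{BM3}, \bref{Monad1} and \bref{Monad2}. The only difference is organisational --- the paper bundles the \bref{HM2}-plus-bimonad manipulations into the lemma $(\dagger)$ and applies \bref{HM1} at both ends of the inserted snake --- so your sketch would need slightly more uses of the axioms than you list, but the strategy is the paper's.
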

\begin{proof}
  We first prove that the following equation holds:
  \[\raisebox{-.45\height}{\begin{picture}(0,0)%
\includegraphics{\figdir/H4_1_25}%
\end{picture}%
\setlength{\unitlength}{4144sp}%
\begingroup\makeatletter\ifx\SetFigFont\undefined%
\gdef\SetFigFont#1#2{%
  \fontsize{#1}{#2pt}%
  \selectfont}%
\fi\endgroup%
\begin{picture}(1461,1134)(2667,-4746)
\end{picture}%
}=\raisebox{-.45\height}{\begin{picture}(0,0)%
\includegraphics{\figdir/H4_1_26}%
\end{picture}%
\setlength{\unitlength}{4144sp}%
\begingroup\makeatletter\ifx\SetFigFont\undefined%
\gdef\SetFigFont#1#2{%
  \fontsize{#1}{#2pt}%
  \selectfont}%
\fi\endgroup%
\begin{picture}(1461,1134)(2667,-4746)
\end{picture}%
}.\eqno{(\dagger)}\]
  This is true for the following reason:
  \begin{align*}
    \text{LHS}&:=\quad\raisebox{-.45\height}{\begin{picture}(0,0)%
\includegraphics{\figdir/H4_1_27}%
\end{picture}%
\setlength{\unitlength}{4144sp}%
\begingroup\makeatletter\ifx\SetFigFont\undefined%
\gdef\SetFigFont#1#2{%
  \fontsize{#1}{#2pt}%
  \selectfont}%
\fi\endgroup%
\begin{picture}(1461,1134)(2667,-4746)
\end{picture}%
}\quad
    \stackrel{\handtag{BM1}}=\quad\raisebox{-.45\height}{\begin{picture}(0,0)%
\includegraphics{\figdir/H4_1_28}%
\end{picture}%
\setlength{\unitlength}{4144sp}%
\begingroup\makeatletter\ifx\SetFigFont\undefined%
\gdef\SetFigFont#1#2{%
  \fontsize{#1}{#2pt}%
  \selectfont}%
\fi\endgroup%
\begin{picture}(1461,1134)(2667,-4746)
\end{picture}%
}\\
    &\stackrel{\bref{HM2}}=\quad\raisebox{-.45\height}{\begin{picture}(0,0)%
\includegraphics{\figdir/H4_1_29}%
\end{picture}%
\setlength{\unitlength}{4144sp}%
\begingroup\makeatletter\ifx\SetFigFont\undefined%
\gdef\SetFigFont#1#2{%
  \fontsize{#1}{#2pt}%
  \selectfont}%
\fi\endgroup%
\begin{picture}(1461,1134)(2667,-4746)
\end{picture}%
}\quad
    \stackrel{\handtag{BM3}}=\quad\raisebox{-.45\height}{\begin{picture}(0,0)%
\includegraphics{\figdir/H4_1_30}%
\end{picture}%
\setlength{\unitlength}{4144sp}%
\begingroup\makeatletter\ifx\SetFigFont\undefined%
\gdef\SetFigFont#1#2{%
  \fontsize{#1}{#2pt}%
  \selectfont}%
\fi\endgroup%
\begin{picture}(1461,1134)(2667,-4746)
\end{picture}%
}\\
    &\stackrel{\bref{HM2}}=\quad\raisebox{-.45\height}{\begin{picture}(0,0)%
\includegraphics{\figdir/H4_1_31}%
\end{picture}%
\setlength{\unitlength}{4144sp}%
\begingroup\makeatletter\ifx\SetFigFont\undefined%
\gdef\SetFigFont#1#2{%
  \fontsize{#1}{#2pt}%
  \selectfont}%
\fi\endgroup%
\begin{picture}(1461,1134)(2667,-4746)
\end{picture}%
}\quad
    \stackrel{\bref{Monad2}}=\quad\raisebox{-.45\height}{\begin{picture}(0,0)%
\includegraphics{\figdir/H4_1_32}%
\end{picture}%
\setlength{\unitlength}{4144sp}%
\begingroup\makeatletter\ifx\SetFigFont\undefined%
\gdef\SetFigFont#1#2{%
  \fontsize{#1}{#2pt}%
  \selectfont}%
\fi\endgroup%
\begin{picture}(1461,1134)(2667,-4746)
\end{picture}%
}\\
    &\stackrel{\bref{HM2}}=\quad\raisebox{-.45\height}{\begin{picture}(0,0)%
\includegraphics{\figdir/H4_1_33}%
\end{picture}%
\setlength{\unitlength}{4144sp}%
\begingroup\makeatletter\ifx\SetFigFont\undefined%
\gdef\SetFigFont#1#2{%
  \fontsize{#1}{#2pt}%
  \selectfont}%
\fi\endgroup%
\begin{picture}(1461,1134)(2667,-4746)
\end{picture}%
}\quad
    \stackrel{\bref{Monad1}}=\quad\raisebox{-.45\height}{\begin{picture}(0,0)%
\includegraphics{\figdir/H4_1_34}%
\end{picture}%
\setlength{\unitlength}{4144sp}%
\begingroup\makeatletter\ifx\SetFigFont\undefined%
\gdef\SetFigFont#1#2{%
  \fontsize{#1}{#2pt}%
  \selectfont}%
\fi\endgroup%
\begin{picture}(1461,1134)(2667,-4746)
\end{picture}%
}
    =\quad\text{RHS}.
  \end{align*}
 Also, by \handtag{HM1} we have
 \[\raisebox{-.45\height}{\begin{picture}(0,0)%
\includegraphics{\figdir/H4_1_35}%
\end{picture}%
\setlength{\unitlength}{4144sp}%
\begingroup\makeatletter\ifx\SetFigFont\undefined%
\gdef\SetFigFont#1#2{%
  \fontsize{#1}{#2pt}%
  \selectfont}%
\fi\endgroup%
\begin{picture}(1562,1030)(2681,-3048)
\end{picture}%
}=\raisebox{-.45\height}{\begin{picture}(0,0)%
\includegraphics{\figdir/H4_1_36}%
\end{picture}%
\setlength{\unitlength}{4144sp}%
\begingroup\makeatletter\ifx\SetFigFont\undefined%
\gdef\SetFigFont#1#2{%
  \fontsize{#1}{#2pt}%
  \selectfont}%
\fi\endgroup%
\begin{picture}(1562,1030)(2681,-3048)
\end{picture}%
}.\eqno{(\dagger\dagger)}\]
  Thus
  \begin{align*}
    \raisebox{-.45\height}{\begin{picture}(0,0)%
\includegraphics{\figdir/H4_1_37}%
\end{picture}%
\setlength{\unitlength}{4144sp}%
\begingroup\makeatletter\ifx\SetFigFont\undefined%
\gdef\SetFigFont#1#2{%
  \fontsize{#1}{#2pt}%
  \selectfont}%
\fi\endgroup%
\begin{picture}(1194,1650)(2689,-3364)
\end{picture}%
}
    &\stackrel{\bref{Duality2}}=\raisebox{-.45\height}{\begin{picture}(0,0)%
\includegraphics{\figdir/H4_1_38}%
\end{picture}%
\setlength{\unitlength}{4144sp}%
\begingroup\makeatletter\ifx\SetFigFont\undefined%
\gdef\SetFigFont#1#2{%
  \fontsize{#1}{#2pt}%
  \selectfont}%
\fi\endgroup%
\begin{picture}(1543,1645)(2689,-3359)
\end{picture}%
}
     \stackrel{\handtag{$\dagger\dagger$}}\quad=\quad\raisebox{-.45\height}{\begin{picture}(0,0)%
\includegraphics{\figdir/H4_1_39}%
\end{picture}%
\setlength{\unitlength}{4144sp}%
\begingroup\makeatletter\ifx\SetFigFont\undefined%
\gdef\SetFigFont#1#2{%
  \fontsize{#1}{#2pt}%
  \selectfont}%
\fi\endgroup%
\begin{picture}(1543,1645)(2689,-3359)
\end{picture}%
}\\
     &\stackrel{\handtag{$\dagger$}}=\quad\raisebox{-.45\height}{\begin{picture}(0,0)%
\includegraphics{\figdir/H4_1_40}%
\end{picture}%
\setlength{\unitlength}{4144sp}%
\begingroup\makeatletter\ifx\SetFigFont\undefined%
\gdef\SetFigFont#1#2{%
  \fontsize{#1}{#2pt}%
  \selectfont}%
\fi\endgroup%
\begin{picture}(1543,1645)(2689,-3359)
\end{picture}%
}\quad
     \stackrel{\handtag{$\dagger\dagger$}}=\quad\raisebox{-.45\height}{\begin{picture}(0,0)%
\includegraphics{\figdir/H4_1_41}%
\end{picture}%
\setlength{\unitlength}{4144sp}%
\begingroup\makeatletter\ifx\SetFigFont\undefined%
\gdef\SetFigFont#1#2{%
  \fontsize{#1}{#2pt}%
  \selectfont}%
\fi\endgroup%
\begin{picture}(1543,1645)(2689,-3359)
\end{picture}%
}\\
     &\stackrel{\bref{Duality2}}=\quad\raisebox{-.45\height}{\begin{picture}(0,0)%
\includegraphics{\figdir/H4_1_42}%
\end{picture}%
\setlength{\unitlength}{4144sp}%
\begingroup\makeatletter\ifx\SetFigFont\undefined%
\gdef\SetFigFont#1#2{%
  \fontsize{#1}{#2pt}%
  \selectfont}%
\fi\endgroup%
\begin{picture}(1194,1650)(2689,-3364)
\end{picture}%
}\quad.
  \end{align*}
\end{proof}
Now Theorem~\ref{thm:HM1evalHM2coeval} and Theorem~\ref{Theorem:HM1HM2impliesHM0} immediately imply Theorem~\ref{thm:antipodeduality} as required.

\subsection{Hopf monads from adjoint functors}
Suppose that there is an adjunction \(F\dashv U\) where
\(U\colon \DD\to \CC\) is a strong monoidal functor
between monoidal categories with (left) duals.  We know from Section~\ref{Subsection:BimonadsFromAdjoint}  that \(U\circ F\) is
a bimonad; we will see that it is also naturally a Hopf monad, i.e.,
that it naturally comes equipped with an antipode.  This is due to
Brugi\`eres and Virelizier but we make the structure more explicit
than in their paper.

\subsubsection{Overview}\label{section:HMAFoverview}
The key point for the definition of the antipode is that if \(U\) is strong monoidal then it commutes
with taking duals.  More precisely, we will see below that there is a natural isomorphism
\({}^\vee\circ U^\op \cong U\circ \prevee\) and we will draw the
mutually inverse
transformations as
\[\raisebox{-.45\height}{\begin{picture}(0,0)%
\includegraphics{\figdir/H4_2_1}%
\end{picture}%
\setlength{\unitlength}{4144sp}%
\begingroup\makeatletter\ifx\SetFigFont\undefined%
\gdef\SetFigFont#1#2{%
  \fontsize{#1}{#2pt}%
  \selectfont}%
\fi\endgroup%
\begin{picture}(924,765)(1069,-353)
\put(1402,-190){\makebox(0,0)[rb]{\smash{{\SetFigFont{8}{9.6}{\color[rgb]{0,0,0}$U$}%
}}}}
\end{picture}%
}
\qquad\text{and}\qquad
\raisebox{-.45\height}{\begin{picture}(0,0)%
\includegraphics{\figdir/H4_2_2}%
\end{picture}%
\setlength{\unitlength}{4144sp}%
\begingroup\makeatletter\ifx\SetFigFont\undefined%
\gdef\SetFigFont#1#2{%
  \fontsize{#1}{#2pt}%
  \selectfont}%
\fi\endgroup%
\begin{picture}(924,765)(1069,-354)
\put(1370,233){\makebox(0,0)[rb]{\smash{{\SetFigFont{8}{9.6}{\color[rgb]{0,0,0}$U$}%
}}}}
\end{picture}%
}.\]
We will also see below how to define these from the monoidal structure
of \(U\), but first we can use these together with the unit and counit
of the adjunction to
define  \(\SSS\colon U\circ F\circ{}^\vee\circ U^\op\circ
F^\op\nattrans{}^\vee\), the antipode for the bimonad \(U\circ F\), in the following way:
 \[\SSS :=\raisebox{-.45\height}{\input{\figdir/H4_2_3.pstex_t}}.\]
This will be shown to indeed be an antipode in Section~\ref{Subsubsection:SIsAntipode}, but the reader is invited to check diagrammatically that this satisfies \bref{HM0}.

\subsubsection{Strong monoidal functors commute with taking duals} We
will now show that \(U\) commutes with taking duals, i.e., that there exists a natural isomorphism \({}^\vee\circ U^\op \cong U\circ \prevee\).  On the level of objects
the idea is that for \(d\in \DD\) the objects \(\prevee U^\op(d)\) and
\(U(\prevee d)\) are both left duals of \(U(d)\), and so they are
canonically isomorphic via the standard yoga: namely, being somewhat
fastidious, we have
 \begin{align*}
   \prevee U^\op(d)
   &\to
      \prevee U^\op(d)\otimes \one
      \to
      \prevee U^\op(d)\otimes U(\one)
      \to
      \prevee U^\op(d)\otimes U(d\otimes \prevee d)
      \\
      &\to
      \prevee U^\op(d)\otimes \left(U(d)\otimes U(\prevee d)\right)
      \to
      \left( \prevee U^\op(d)\otimes U(d)\right)\otimes U(\prevee d)\\
      &\to
      \one \otimes U(\prevee d)
      \to
      U(\prevee d).
 \end{align*}
This gives rise to a natural isomorphism \({}^\vee\circ U^\op \nattrans U\circ
\prevee\) constructed from the dinatural transformations \(\ev\) and
\(\coev\) as follows:
 \[\raisebox{-.45\height}{}:=\raisebox{-.45\height}{\begin{picture}(0,0)%
\includegraphics{\figdir/H4_2_4}%
\end{picture}%
\setlength{\unitlength}{3947sp}%
\begingroup\makeatletter\ifx\SetFigFont\undefined%
\gdef\SetFigFont#1#2{%
  \fontsize{#1}{#2pt}%
  \selectfont}%
\fi\endgroup%
\begin{picture}(1544,1724)(3554,-3274)
\end{picture}%
}.\]
The inverse transformation is defined similarly.

Using these, we can now show that \(U\) also  commutes with evaluation
and coevaluation.
\begin{thm}\label{Thm:UcommutesEv}
  If \(U\) is a strong monoidal functor as above then \(U\) commutes
  with evaluation and coevaluation in the following sense:
  \[\raisebox{-.45\height}{\begin{picture}(0,0)%
\includegraphics{\figdir/H4_4_5}%
\end{picture}%
\setlength{\unitlength}{4144sp}%
\begingroup\makeatletter\ifx\SetFigFont\undefined%
\gdef\SetFigFont#1#2{%
  \fontsize{#1}{#2pt}%
  \selectfont}%
\fi\endgroup%
\begin{picture}(1150,857)(2993,-3068)
\put(3622,-2982){\makebox(0,0)[rb]{\smash{{\SetFigFont{8}{9.6}{\color[rgb]{0,0,0}$U$}%
}}}}
\end{picture}%
}=\raisebox{-.45\height}{\begin{picture}(0,0)%
\includegraphics{\figdir/H4_4_6}%
\end{picture}%
\setlength{\unitlength}{4144sp}%
\begingroup\makeatletter\ifx\SetFigFont\undefined%
\gdef\SetFigFont#1#2{%
  \fontsize{#1}{#2pt}%
  \selectfont}%
\fi\endgroup%
\begin{picture}(1155,860)(2988,-3068)
\put(3157,-2564){\makebox(0,0)[rb]{\smash{{\SetFigFont{8}{9.6}{\color[rgb]{0,0,0}$U$}%
}}}}
\end{picture}%
};\qquad\raisebox{-.45\height}{\begin{picture}(0,0)%
\includegraphics{\figdir/H4_4_7}%
\end{picture}%
\setlength{\unitlength}{4144sp}%
\begingroup\makeatletter\ifx\SetFigFont\undefined%
\gdef\SetFigFont#1#2{%
  \fontsize{#1}{#2pt}%
  \selectfont}%
\fi\endgroup%
\begin{picture}(1180,760)(3785,-2834)
\put(4432,-2504){\makebox(0,0)[rb]{\smash{{\SetFigFont{8}{9.6}{\color[rgb]{0,0,0}$U$}%
}}}}
\end{picture}%
}=\raisebox{-.45\height}{\begin{picture}(0,0)%
\includegraphics{\figdir/H4_4_8}%
\end{picture}%
\setlength{\unitlength}{4144sp}%
\begingroup\makeatletter\ifx\SetFigFont\undefined%
\gdef\SetFigFont#1#2{%
  \fontsize{#1}{#2pt}%
  \selectfont}%
\fi\endgroup%
\begin{picture}(1169,752)(3329,-3463)
\put(3496,-3235){\makebox(0,0)[rb]{\smash{{\SetFigFont{8}{9.6}{\color[rgb]{0,0,0}$U$}%
}}}}
\end{picture}%
}.\]
  In more traditional notation this is expressing the commutativity of
  the following diagrams:
{\footnotesize\[\raisebox{25pt}{\xymatrix@C10pt{
  \prevee U^\op(d)\otimes U(d)
  \ar[rr]
  \ar[d]
  &
  &\one\\
  U(\prevee d)\otimes U(d)\ar[r]
  &  U(\prevee d\otimes d)\ar[r]
  &U(\one)\ar[u]
}};\quad\raisebox{25pt}{
\xymatrix@C10pt{
  \one
  \ar[rr]
  \ar[d]
  &
  & U^\op(d)\otimes\prevee U(d)\\
  U(\one)\ar[r]
  &  U( d\otimes \prevee d)\ar[r]
  &U( d)\otimes U(\prevee d)\ar[u]
}}
\]}

\end{thm}
%
\begin{proof}
 We will consider the evaluation case as the coevaluation case is similar.  The left hand diagram is seen to commute as soon as its left hand arrow is unpacked as in the following diagram.\bigskip
 {\footnotesize
  \[\xymatrix@C10pt{
   \prevee U^\op(d)\otimes U(d)
   \ar@/^2em/[rr]
   \ar[d]
   \ar[rd]
   \ar@/^4em/[rrdd]
   &&\one
   \\
   \prevee U^\op(d)\otimes U(\one)\otimes U(d)
   \ar[d]\ar[r]
   &
   \prevee U^\op(d)\otimes U(\one\otimes d)
   \ar[d]\ar[rd]
   \\
   \prevee U^\op(d)\otimes U(d\otimes\prevee d)\otimes U(d)
   \ar[d]\ar[r]
   &\prevee U^\op(d)\otimes U(d \otimes\prevee d \otimes d)
   \ar[d]\ar[r]
   &\prevee U^\op(d)\otimes U(d)\ar[uu]
   \\
   \prevee U^\op(d)\otimes U(d)\otimes U(\prevee d)\otimes U(d)
   \ar[d]
   \ar[r]
   & \prevee U^\op(d)\otimes U(d)\otimes U(\prevee d\otimes d)
   \ar[d]
   \\
   \one\otimes U(\prevee d)\otimes U(d)
   \ar[d]
   &\prevee U^\op(d)\otimes U(d)\otimes U(\one)
   \ar@/_2em/[uur]
   \\
   U(\prevee d)\otimes U(d)\ar[r]
   &
   U(\prevee d\otimes d)\ar[r]
   &U(\one)\ar@/_5em/[uuuuu]
 }
 \]}
\end{proof}


\subsubsection{The antipode and Hopf monad}
\label{Subsubsection:SIsAntipode}
We can now prove that the natural transformation \(\SSS\) defined above
does give a left antipode for the bimonad \(U\circ F\).  This is
essentially the proof of Theorem~3.14 in~\cite{BruguieresVirelizier:Hopf}.
\begin{thm}
  If \(F\dashv U\) is an adjunction where \(U\) is a strong monoidal
  functor between monoidal categories with left duals, then the
  natural transformation \(\SSS \colon T\circ{}^\vee\circ
  T^\op\nattrans {}^\vee\) (defined in
  Section~\ref{section:HMAFoverview}) is a left antipode for the
  bimonad \(U\circ F\).
\end{thm}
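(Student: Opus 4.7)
By Theorem~\ref{Theorem:HM1HM2impliesHM0}, it suffices to verify the antipode axioms \bref{HM1} and \bref{HM2} for $\SSS$; the condition \bref{HM0} then follows automatically. So the plan is to take the defining diagram
\[\SSS=\pstex{H4_2_3}\]
and, by manipulating the three-dimensional picture, reduce each of \bref{HM1} and \bref{HM2} to a tautology. The three key ingredients available are (i) the snake relations \bref{Duality1} and \bref{Duality2} for the duality on $\DD$, (ii) Theorem~\ref{Thm:UcommutesEv}, which says that $U$ commutes with $\ev$ and $\coev$ up to the natural isomorphism ${}^\vee\circ U^\op\cong U\circ{}^\vee$, and (iii) the triangle identities for the adjunction $F\dashv U$, together with the fact that the bimonad structure on $UF$ is induced from the monoidal structure of $U$ and the opmonoidal structure of $F$ as described in Section~\ref{Subsection:BimonadsFromAdjoint}.

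I would tackle \bref{HM1} first. After substituting the definition of $\SSS$, the left-hand side contains a configuration of the form ${}^\vee U^\op\to U{}^\vee$ followed by $\ev$ coming from the duality on $\CC$; by Theorem~\ref{Thm:UcommutesEv} this is exactly $U$ applied to the evaluation in $\DD$, so the $\ev_\CC$ on the outside can be replaced by $U(\ev_\DD)$. Next I would use one of the triangle identities for $F\dashv U$ to cancel a $UF$\,-\,$\epsilon$ pair created on the $U^\op F^\op$ input strand, and simultaneously use that the bimonad comultiplication $\sigma^{UF}_2$ is, by definition, built out of the opmonoidal structure on $F$ and the monoidal structure on $U$. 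After these cancellations the surface collapses into two parallel $\ev_\DD$ applied under a single copy of $U$, which is precisely the right-hand side of \bref{HM1}.

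The argument for \bref{HM2} is formally dual, using the coevaluation version of Theorem~\ref{Thm:UcommutesEv}, the other snake relation \bref{Duality2}, and the other triangle identity; I would write it as a short string of picture equalities rather than redoing the bookkeeping from scratch. Both verifications are essentially identical in spirit to the proofs of \bref{BM1}--\bref{BM4} carried out in Section~\ref{Subsection:BimonadsFromAdjoint} for the bimonad structure, and they use the same moves in the three-dimensional calculus.

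The main obstacle is purely presentational: keeping track, in the three-dimensional diagram, of which surface carries $U$, which carries $F$, and on which side of a surface the shading (opposite category) lies, while one slides $\epsilon$'s and $\eta$'s past the ${}^\vee\circ U^\op\cong U\circ{}^\vee$ isomorphism. There is no conceptual difficulty beyond this; the content of the theorem is really just that strong monoidal functors commute with duals (Theorem~\ref{Thm:UcommutesEv}) plus the triangle identities, and the diagrammatic language is designed precisely to make that content manifest.
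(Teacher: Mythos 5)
Your proposal follows essentially the same route as the paper: it reduces the claim to verifying \bref{HM1} and \bref{HM2} via Theorem~\ref{Theorem:HM1HM2impliesHM0}, and then establishes \bref{HM1} by a diagrammatic computation whose key moves are the two applications of Theorem~\ref{Thm:UcommutesEv} together with the adjunction triangle identities and the definition of the bimonad structure from Section~\ref{Subsection:BimonadsFromAdjoint}, with \bref{HM2} handled as the analogous/dual case. This matches the paper's proof in structure and in the ingredients used; the only difference is that the paper writes out the full chain of picture equalities explicitly, whereas you describe the moves in prose.
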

\begin{proof}  By Theorem~\ref{Theorem:HM1HM2impliesHM0}, it
  suffices to show that \bref{HM1} and \bref{HM2} are satisfied.  I will just
  give the proof of \bref{HM1}; the proof of \bref{HM2} is analogous.
\begin{align*}
  \raisebox{-.45\height}{\input{\figdir/H4_4_9.pstex_t}}&=\raisebox{-.45\height}{\input{\figdir/H4_4_10.pstex_t}}
  \stackrel{\text{Thm~\ref{Thm:UcommutesEv}}}=\raisebox{-.45\height}{\input{\figdir/H4_4_11.pstex_t}}\\
  &=\raisebox{-.45\height}{\input{\figdir/H4_4_12.pstex_t}}=\raisebox{-.45\height}{\input{\figdir/H4_4_13.pstex_t}}\\
  &\stackrel{\text{Thm~\ref{Thm:UcommutesEv}}}=\raisebox{-.45\height}{\input{\figdir/H4_4_14.pstex_t}}=\raisebox{-.45\height}{\input{\figdir/H4_4_15.pstex_t}}.
\end{align*}
\end{proof}
\noindent Thus we get that in this case \(U\circ F\) is indeed a Hopf
monad.

\end{document}